\numberwithin{equation}{section}
\numberwithin{equation}{section}
\begin{document}

\Year{2020} %
\Month{September}
\Vol{56} %
\No{1} %
\BeginPage{1} %
\EndPage{XX} %
\AuthorMark{Yongjian Wang {\it et al.}}

\title[Coexistence of zero LE and positive LE]{Coexistence of zero Lyapunov exponent and positive Lyapunov exponent for new quasi-periodic Schr$\ddot{o}$dinger operator}{}


\author[1,2]{Yongjian Wang}{}
\author[1,2,3]{Zuohuan Zheng}{Corresponding author}

%
\address[{\rm1}]{Academy of Mathematics and Systems Science, Chinese Academy of Sciences, Beijing {\rm 100190}, China;}
\address[{\rm2}]{University of Chinese Academy of Sciences, Beijing {\rm 100049}, China;}
\address[{\rm3}]{College of Mathematics and Statistics, Hainan Normal University, Haikou, Hainan {\rm571158}, China;}
\Emails{wangyongjian@amss.ac.cn,zhzheng@amt.ac.cn}\maketitle


 {\begin{center}
\parbox{14.5cm}{\begin{abstract}
In this paper we solve a problem about the Schr$\ddot{o}$dinger operator with potential $v(\theta)=2\lambda cos2\pi\theta/(1-\alpha cos2\pi\theta),\ (|\alpha|<1)$ in physics. With the help of the formula of Lyapunov exponent in the spectrum, the coexistence of zero Lyapunov exponent and positive Lyapunov exponent for some parameters is first proved, and there exists a curve that separates them. The spectrum in the region of positive Lyapunov exponent is purely pure point spectrum with exponentially decaying eigenfunctions for almost every frequency and almost every phase. From the research, we realize that the infinite potential $v(\theta)=2\lambda tan^2(\pi\theta)$ has zero Lyapunov exponent for some energies if $0<|\lambda|<1$.\vspace{-3mm}
\end{abstract}}\end{center}}

 \keywords{quasi-periodic, Schr$\ddot{o}$dinger operators, Lyapunov exponent, spectrum, pure point}

 \MSC{37A30, 37D25, 47B36}

\renewcommand{\baselinestretch}{1.2}
\begin{center} \renewcommand{\arraystretch}{1.5}
{\begin{tabular}{lp{0.8\textwidth}} \hline \scriptsize
{\bf Citation:}\!\!\!\!&\scriptsize First1 L N, First2 L N, First3 L N.  SCIENCE CHINA Mathematics  journal sample. Sci China Math, 2013, 56, doi: 10.1007/s11425-000-0000-0\vspace{1mm}
\\
\hline
\end{tabular}}\end{center}

\baselineskip 11pt\parindent=10.8pt  \wuhao
\section{Introduction and main results}
Since the 1970's, Schr$\ddot{o}$dinger operators have been popular in solid-state physics. Schr$\ddot{o}$dinger operators with random or quasi-periodic potentials can describe the influence of an external magnetic field on the electrons of a crystal, and model Hamiltonians of quantum mechanical systems. Anderson model and almost Mathieu operator are two widely studied examples. The spectral type including absolutely continuous spectrum, singular continuous spectrum and pure point spectrum is one of the major researches.

In physics, Anderson localization \cite{APW} describes insulating behavior in the sense that quantum states are localized in a bounded region all the time. The quantum state satisfying Anderson localization is called localized state, otherwise is called extended state. In mathematics, Anderson localization means pure point spectrum with exponentially decaying eigenfunctions.

The almost Mathieu operator ($v(\theta)=2\lambda cos2\pi\theta$) has been throughly studied and has purely spectral types in three different cases \cite{J}, \cite{AYZ}: for almsot every pair $(\theta,b)$ the Schr$\ddot{o}$dinger operator has purely absolutely continuous spectrum if $|\lambda|<1$, purely pure point spectrum with exponentially decaying eigenfunctions if $|\lambda|>1$ and purely singular continuous spectrum if $|\lambda|=1$. Therefor, phase transition occurs when $\lambda$ goes from $|\lambda|>1$ to $|\lambda|<1$. However, we don't know what is happening in the transition region ($\lambda\approx1$). In fact, we have a deep understanding of the nature of Sch$\ddot{o}$dinger operators with ``large''($|\lambda|\gg1$) and ``small''($|\lambda|\ll1$) analytic potentials: for ``small'' $|\lambda|$ the Schr$\ddot{o}$dinger operator has purely absolutely continuous spectrum and has zero Lyapunov exponent in the spectrum\cite{BoJ}; for ``large'' $|\lambda|$ the Schr$\ddot{o}$dinger operator has pure point spectrum (for almost every $\theta$) with exponentially decaying eigenfunctions and positive Lyapunov exponent for all energies in $\mathbb{R}$\cite{BG}. Nonetheless, the phase transition between absolutely continuous and pure point spectrum has been considerably harder to understand. Hence it is essential to study mixed spectral types for Schr$\ddot{o}$dinger operators though such examples for one-frequency discrete case have been considered difficult to construct explicitly. Bourgain \cite{JB} constructed a quasi-periodic Schr$\ddot{o}$dinger operator with two frequencies which has both absolutely continuous and pure point spectrum. Bjerkl$\ddot{o}$v \cite{K} gave examples which have positive Lyapunov exponent on certain regions of the spectrum and zero on other regions. In his examples, operators with arbitrarily large potentials may have zero Lyapunov exponent for certain energies. However, he failed to prove the coexistence of more than two spectral types. Zhang \cite{Z}proved these examples have coexistence of absolutely continuous and pure point spectrum for some parameters as well as coexistence of absolutely continuous and singular continuous spectrum  for some other parameters. Avila \cite{A} showed that perturbations of the critical almost Mathieu operator (with potential $v(\theta)=2cos2\pi\theta$) may have arbitrarily many alternances between subcritical and supercritical regimes.

We consider the following one-dimensional quasi-periodic Schr$\ddot{o}$dinger operator on $l^2$($\mathbb{Z}$):

\begin{equation}\label{mymodel}
	(H_{\alpha,\theta,b}u)_{n}=u_{n+1}+u_{n-1}+v(nb+\theta)u_{n},\ n\in \mathbb{Z},
\end{equation}
where
\begin{equation}\label{mypotential}
	v(\theta)=2\lambda\frac{cos(2\pi\theta)}{1-\alpha cos(2\pi\theta)},\ \alpha\in (-1,1),
\end{equation}
$v$ is the potential, $\alpha$ is the 
parameter, $\theta\in\mathbb{T}=\mathbb{R}/\mathbb{Z}$ is the phase, $\lambda\in\mathbb{R}$ is the coupling, the frequency $b\in\mathbb{R}$ is irrational. It is trivial when $\lambda=0$.

This model was first introduced by Ganeshan S, Pixley J H and Sarma S D in the top journals (Physical Review Letters) \cite{GHD}. They did some calculations based on self-dual condition and did some realistic experiments about atomic optical lattices and photonic waveguides to numerically verify the following problem:
\begin{problem}\label{problem1.1}\quad
For the model \ref{mymodel}, there exists a one-dimensional mobility edge 
\begin{equation}\label{edge}
\alpha E=2sgn(\lambda)(1-|\lambda|),
\end{equation}
that separates localized states from extended states if they coexist. 
\end{problem}
See Figure \ref{figure} for illustration.
\begin{figure}[htbp]
\centering
\subfigure[$\lambda=-0.9$]{
\begin{minipage}[t]{0.5\linewidth}
\centering
\includegraphics[scale=0.68]{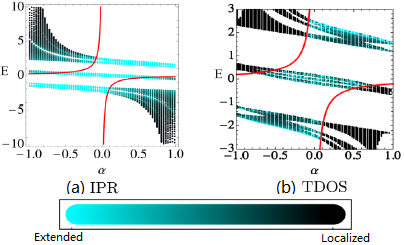}
\end{minipage}%
}%
\subfigure[$\lambda=-1.1$]{
\begin{minipage}[t]{0.5\linewidth}
\centering
\includegraphics[scale=0.68]{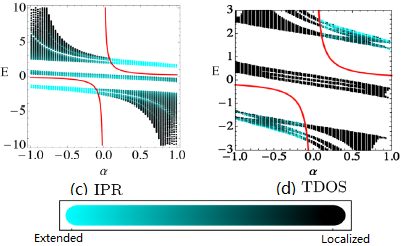}
\end{minipage}%
}%
\centering
\caption{Relations between $E$ in the spectrum and parameter $\alpha$ for different $\lambda$}\label{figure}
\end{figure}
The x-coordinate represents the parameter $\alpha$, and the y-coordinate represents the corresponding energy in the spectrum.
The localization properties of a quantum state can be numerically quantified by $IPR$ and $TDOS$ (see the definitions in \cite{GHD}). Pure cyan denotes $IPR=0$ for the extended state and pure black denotes $IPR=1$ for the localized state. In addition, pure cyan denotes maximum $TDOS$ values between 1 and 10 for the extended state and pure black denotes $TDOS=0$ for the localized state. It is clear that the red curve (\ref{edge}) separates localized states from extended states.

In this paper, we give the first strict proof of the Problem \ref{problem1.1}. In addition, we show that zero Lyapunov exponent and positive Lyapunov exponent coexist under some conditions. Moreover, we give an example that has infinite potential and zero Lyapunov exponent for some energies.

Our main results of this paper are the following theorems:
\begin{theorem}\label{theorem2.8}\quad For the model (\ref{mymodel}), we have the formula of Lyapunov exponent 
	$$L(b,A)=max\{log|\frac{\alpha E+2\lambda\pm\sqrt{(\alpha E+2\lambda)^2-4\alpha^2}}{2(1+\sqrt{1-\alpha^2})}|,0\}$$
	for every $E$ in the spectrum.
\end{theorem}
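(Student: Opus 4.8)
The plan is to identify $L(b,A)$ with the Lyapunov exponent of the $SL(2,\mathbb R)$ transfer cocycle over the rotation $\theta\mapsto\theta+b$, to clear the denominator of $v$ so as to replace this pole-carrying cocycle by an \emph{entire} one, and then to evaluate the exponent of the entire cocycle by complexifying the phase. The equation $H_{\alpha,\theta,b}u=Eu$ has transfer matrix
$$A(\theta)=\begin{pmatrix}E-v(\theta)&-1\\1&0\end{pmatrix},\qquad L(b,A)=\lim_{n\to\infty}\frac1n\int_{\mathbb T}\log\|A(\theta+(n-1)b)\cdots A(\theta)\|\,d\theta.$$
Writing $z=e^{2\pi i\theta}$ and $t=z+z^{-1}=2\cos2\pi\theta$, a direct computation gives $E-v(\theta)=(ct-2E)/(\alpha t-2)$ with $c:=\alpha E+2\lambda$, so that $A(\theta)=D(\theta)^{-1}M(\theta)$ with
$$D(\theta)=\alpha z^2-2z+\alpha,\qquad M(\theta)=\begin{pmatrix}c(z^2+1)-2Ez&-D(\theta)\\ D(\theta)&0\end{pmatrix}.$$
The entries of $M$ are trigonometric polynomials, so $M$ extends to an entire cocycle with $\det M=D^2$. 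As $D$ is scalar, $A_n=\big(\prod_{j=0}^{n-1}D(\theta+jb)\big)^{-1}M_n$, and unique ergodicity of the rotation yields
$$L(b,A)=L(b,M)-\int_{\mathbb T}\log|D(\theta)|\,d\theta .$$

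Two ingredients are then routine. First, the roots of $D$ are $r_{\pm}=(1\pm\sqrt{1-\alpha^2})/\alpha$ with $r_+r_-=1$ and $|r_+|>1>|r_-|$, so Jensen's formula gives $\int_{\mathbb T}\log|D|=\log|\alpha|+\log^+|r_+|+\log^+|r_-|=\log(1+\sqrt{1-\alpha^2})$. Second, letting $\mathrm{Im}\,\theta\to+\infty$ (i.e. $z\to0$) we have $M(\theta)\to M_0:=\begin{pmatrix}\alpha E+2\lambda&-\alpha\\\alpha&0\end{pmatrix}$, whose eigenvalues are exactly $\mu_{\pm}=\tfrac12\big(\alpha E+2\lambda\pm\sqrt{(\alpha E+2\lambda)^2-4\alpha^2}\big)$, the quantity in the numerator. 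Thus the theorem is equivalent to
$$L(b,M)=\max\big\{\log|\mu_{\max}(M_0)|,\ \log(1+\sqrt{1-\alpha^2})\big\},$$
since subtracting $\log(1+\sqrt{1-\alpha^2})$ turns the right-hand side into the claimed $\max\{\log|\cdots|,0\}$, the outer maximum with $0$ reflecting the positivity of the exponent of the real $SL(2,\mathbb R)$ cocycle $A$.

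The heart of the proof is the evaluation of the top Lyapunov exponent $L(b,M)$, which I would carry out by complexifying the phase. Set $\ell(\varepsilon):=L(b,M(\cdot+i\varepsilon))$. Subharmonicity of $\theta\mapsto\log\|M_n(\theta+i\varepsilon)\|$ makes $\ell$ convex on $\mathbb R$, and its two ends are explicit: as $\varepsilon\to+\infty$ one has $M(\theta+i\varepsilon)\to M_0$ uniformly, so $\ell(\varepsilon)\to\log|\mu_{\max}(M_0)|$ with slope $0$; as $\varepsilon\to-\infty$ one has $M(\theta+i\varepsilon)\sim z^2M_0$, so $\ell(\varepsilon)=-4\pi\varepsilon+\log|\mu_{\max}(M_0)|+o(1)$ with slope $-4\pi$. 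On each of the three annuli cut out by the singular circles $|z|=|r_{\pm}|$ the normalized cocycle $A=M/D$ is analytic and $SL(2,\mathbb C)$-valued, so Avila's theory makes the acceleration $\tfrac1{2\pi}\ell'$ integer-valued there; combined with convexity, the slope of $\ell$ rises monotonically from $-4\pi$ to $0$ through integer multiples of $2\pi$ in $[-4\pi,0]$. The claimed value $\ell(0)=\max\{\log|\mu_{\max}(M_0)|,\log(1+\sqrt{1-\alpha^2})\}$ is then equivalent to locating the last transition of the acceleration on $[0,\infty)$: it occurs at the unique level $\varepsilon^{*}$ where $\log|\mu_{\max}(M_0)|=\int_{\mathbb T}\log|D(\cdot+i\varepsilon^{*})|$, so that $\ell(0)=\log|\mu_{\max}(M_0)|$ exactly when $\varepsilon^{*}\le0$ (the positive-exponent, localized regime) and $\ell(0)=\log(1+\sqrt{1-\alpha^2})$ otherwise (the zero-exponent, extended regime).

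The main obstacle is the matching \emph{upper} bound for $\ell(0)$, equivalently the proof that the dominated splitting holding for large $\varepsilon$ — where $M$ is a small perturbation of the constant hyperbolic matrix $M_0$ — persists as $\varepsilon$ decreases precisely down to $\varepsilon^{*}$. I would establish this by propagating a forward-invariant cone field, anchored at the unstable eigendirection of $M_0$, inward across the annuli; domination then forces $\ell$ to be affine (here constant) and equal to $\log|\mu_{\max}(M_0)|$ on $(\varepsilon^{*},\infty)$. The delicate point is the singular circle $|z|=|r_-|$, where $D$ vanishes and $M$ drops to rank one — exactly where $A$ has its pole: the cone must be shown to survive the collapse of the image onto a single ray, forcing a careful, $E,\lambda,\alpha$-dependent choice of cone. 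In the complementary regime one argues instead that the cocycle is subcritical in Avila's sense, so $\ell$ coincides with $\int_{\mathbb T}\log|D(\cdot+i\varepsilon)|$ near $\varepsilon=0$ and $L(b,A)=0$. Once $\ell(0)$ is pinned down, the formula for $L(b,A)$ follows from the factorization together with $\int_{\mathbb T}\log|D|=\log(1+\sqrt{1-\alpha^2})$.
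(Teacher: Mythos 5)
Your reduction is sound as far as it goes, and it is in fact the same opening as the paper's: clearing the pole with the scalar factor (your $D$, the paper's $2-\alpha e^{2\pi i\theta}-\alpha e^{-2\pi i\theta}$, up to the harmless factor $z$), computing $\int_{\mathbb T}\log|D|\,d\theta=\log(1+\sqrt{1-\alpha^2})$ by Jensen, and reading off the constant matrix $M_0$ at $z=0$; this is Herman's argument and it already gives the lower bound $L(b,A)\ge\max\{\log|\mu_{\max}(M_0)|-\log(1+\sqrt{1-\alpha^2}),0\}$ for \emph{all} $E$ (the paper's Theorem \ref{theorem2.1}). The gap is in your upper bound, and it is structural: your proposed mechanism (a cone field giving domination of $M_\varepsilon$ on exactly $(\varepsilon^{*},\infty)$, with $\varepsilon^{*}$ determined by balancing $\log|\mu_{\max}(M_0)|$ against $\int_{\mathbb T}\log|D(\cdot+i\varepsilon^{*})|$) never uses the hypothesis $E\in\sum\nolimits_{b,v}$, yet the formula is false off the spectrum. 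For $E$ in the resolvent set with $|\alpha E+2\lambda|\le 2$ (such $E$ exist, e.g.\ for small $|\lambda|$ the subcritical window is much wider than the spectrum, by Lemma \ref{lemma3.1}), the cocycle $(b,A)$ is uniformly hyperbolic (Lemma \ref{lemma2.5}), so $L(b,A)>0$, while your dichotomy would force $L(b,A)=0$. No argument that is blind to the spectral hypothesis can close.

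Worse, the cone claim fails precisely at the energies the theorem is about. If $E\in\sum\nolimits_{b,v}$ and $L(b,A)>0$, then $(b,A)$ is \emph{not} uniformly hyperbolic (Lemma \ref{lemma2.5}), hence not dominated at $\varepsilon=0$, so no invariant cone field can survive down to any $\varepsilon^{*}<0$. Indeed, since $A$ is real, $\varepsilon\mapsto L(b,A_\varepsilon)$ is even, and in your normalization $\ell'$ jumps from $-4\pi$ to $0$ at $\varepsilon=0$ itself -- a jump of two quanta at the origin, not a single transition at $\varepsilon^{*}$; your picture of $\ell$ constant on $(\varepsilon^{*},0]$ contradicts this symmetry. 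What replaces your cone argument in the paper is: (i) quantization of the acceleration for the entire cocycle (Theorem \ref{theorem2.4}) together with the explicit large-$\varepsilon$ asymptotics and convexity, forcing $\omega(b,B_\varepsilon)\in\{0,1\}$ for $\varepsilon\ge0$; (ii) the spectral hypothesis entering through Lemmas \ref{lemma2.5}--\ref{lemma2.7}: $E\in\sum\nolimits_{b,v}$ with $L>0$ means not uniformly hyperbolic, hence not regular, hence $\omega(b,A)=1$ already at $\varepsilon=0$; then convexity makes $L(b,B_\varepsilon)$ affine with slope $2\pi$ on all of $[0,\infty)$, and matching the explicit value at large $\varepsilon$ pins down $L(b,B_0)$, hence $L(b,A)$. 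The zero-exponent case needs nothing beyond the Herman lower bound. If you replace your domination step by this quantization-plus-non-uniform-hyperbolicity argument, the rest of your reductions go through verbatim.
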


\begin{theorem}\label{theorem3.2}\quad In the model (\ref{mymodel}), for every $E$ in the spectrum,

	\begin{equation}\label{poly}L(b,A)>0\Longleftrightarrow\left\{\quad\begin{matrix*}[l]\alpha E>2sgn(\lambda)(1-|\lambda|)&\lambda>0,\\ \alpha E<2sgn(\lambda)(1-|\lambda|)&\lambda<0,\end{matrix*}\right.
	\end{equation}
and	
	\begin{equation}\label{zely}L(b,A)=0\Longleftrightarrow\left\{\quad\begin{matrix*}[l]\alpha E\le2sgn(\lambda)(1-|\lambda|)&\lambda>0,\\ \alpha E\ge2sgn(\lambda)(1-|\lambda|)&\lambda<0,\\ \forall\,E\in\sum\nolimits_{b,v}&
	\lambda=0.\end{matrix*}\right.
	\end{equation}
	In particular, when $\lambda\neq0$ the curve
	\begin{equation}
	\alpha E=2sgn(\lambda)(1-|\lambda|)
	\end{equation}
	separates the spectrum with positive Lyapunov exponent from the spectrum with zero Lyapunov exponent if they coexist.
\end{theorem}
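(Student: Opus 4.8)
\noindent\emph{Proof strategy.} The plan is to read everything off the explicit formula in Theorem \ref{theorem2.8}. Write $x=\alpha E+2\lambda$ and $D=2(1+\sqrt{1-\alpha^{2}})>0$, so that the quantity whose logarithm appears is
$$
g:=\left|\frac{x\pm\sqrt{x^{2}-4\alpha^{2}}}{D}\right| ,
$$
and, since the outer $\max$ with $0$ simply reflects $L\ge 0$, we have $L(b,A)>0$ exactly when $g>1$ for the branch of the $\pm$ maximizing the modulus, and $L(b,A)=0$ when that maximal $g\le 1$. Thus the whole statement reduces to deciding, for each $E$ in the spectrum, whether $g>1$ or $g\le 1$, and then translating the resulting condition on $x$ into the mobility-edge inequalities.

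First I would split according to the sign of the discriminant $x^{2}-4\alpha^{2}$. If $|x|<2|\alpha|$ the radical is imaginary, the numerator has modulus $\sqrt{x^{2}+(4\alpha^{2}-x^{2})}=2|\alpha|$, and hence
$$
g=\frac{2|\alpha|}{D}=\frac{|\alpha|}{1+\sqrt{1-\alpha^{2}}}=\frac{1-\sqrt{1-\alpha^{2}}}{|\alpha|}<1 ,
$$
so $L(b,A)=0$ throughout this range (note $|x|<2|\alpha|<2$ here). If instead $|x|\ge 2|\alpha|$ the radical is real and does not exceed $|x|$, so the maximizing branch gives $g=\big(|x|+\sqrt{x^{2}-4\alpha^{2}}\big)/D$. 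Isolating the radical in the inequality $g>1$ and squaring, the $x^{2}$ terms cancel and one is left with $|x|>\tfrac{D}{2}+\tfrac{2\alpha^{2}}{D}$; the key simplification is that $\tfrac{D}{2}=1+\sqrt{1-\alpha^{2}}$ while $\tfrac{2\alpha^{2}}{D}=1-\sqrt{1-\alpha^{2}}$ after rationalization, so their sum is exactly $2$. Hence $g>1\iff|x|>2$, and combining with the previous range I obtain the clean criterion $L(b,A)>0\iff|\alpha E+2\lambda|>2$.

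It remains to convert $|\alpha E+2\lambda|>2$ into the one-sided inequalities of (\ref{poly}). The condition is $\alpha E+2\lambda>2$ (branch A) or $\alpha E+2\lambda<-2$ (branch B); for $\lambda>0$ branch A reads $\alpha E>2(1-\lambda)=2\,\mathrm{sgn}(\lambda)(1-|\lambda|)$, while for $\lambda<0$ branch B reads $\alpha E<-2(1+\lambda)=2\,\mathrm{sgn}(\lambda)(1-|\lambda|)$. To discard the spurious branch I would invoke the a priori localization $\sum\nolimits_{b,v}\subseteq[\inf v-2,\ \sup v+2]$ together with the fact that $v(\theta)=2\lambda\cos2\pi\theta/(1-\alpha\cos2\pi\theta)$ is monotone in $c=\cos2\pi\theta$ (its derivative in $c$ is $2\lambda/(1-\alpha c)^{2}$, of constant sign), whence $\{\inf v,\sup v\}=\{-2\lambda/(1+\alpha),\,2\lambda/(1-\alpha)\}$. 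Inserting these endpoints into the spectral bound and checking the four sign combinations of $(\alpha,\lambda)$ shows that for $\lambda>0$ one always has $\alpha E+2\lambda>-2$ on the spectrum (killing branch B) and for $\lambda<0$ one always has $\alpha E+2\lambda<2$ (killing branch A). This leaves exactly the inequalities in (\ref{poly}); the equivalences (\ref{zely}) are their negations, the line $\lambda=0$ is trivial since then $v\equiv0$ and $L\equiv0$, and the separating-curve assertion is immediate from the two equivalences.

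The routine part is the algebra of the second paragraph, which hinges only on the identity $\tfrac{D}{2}+\tfrac{2\alpha^{2}}{D}=2$. The main obstacle is the last paragraph: eliminating the spurious branch is not automatic from the Lyapunov formula alone and genuinely requires the spectral-inclusion bound plus a careful case check in the signs of $\alpha$ and $\lambda$, since each combination uses a different endpoint of the range of $v$ and a different direction of the inequality.
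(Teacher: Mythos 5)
Your proposal is correct and follows essentially the same route as the paper: both reduce the claim via the formula of Theorem \ref{theorem2.8} to the criterion $L>0\iff|\alpha E+2\lambda|>2$ (splitting on the sign of the discriminant), and both eliminate the spurious branch using the spectral inclusion of Lemma \ref{lemma3.1} with a sign case analysis in $(\alpha,\lambda)$. Your treatment of the algebra (the identity $\tfrac{D}{2}+\tfrac{2\alpha^{2}}{D}=2$) is in fact more explicit than the paper's, which asserts the equivalence directly; the only cosmetic omission is the trivial subcase $|x|>D$, where $g>1$ and $|x|>2$ hold automatically.
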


\begin{theorem}\label{theorem3.3}\quad 
In the model (\ref{mymodel}), there are both positive Lyapunov exponent and zero Lyapunov exponent in the spectrum if $1-|\alpha|<|\lambda|<1+|\alpha|$; there is only positive Lyapunov exponent in the spectrum if $|\lambda|>(1+|\alpha|)^2$; there is only zero Lyapunov exponent in the spectrum if $|\lambda|\le(1-|\alpha|)^2$.
\end{theorem}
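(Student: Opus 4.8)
The plan is to read off the three regimes by comparing the position of the threshold curve with the convex hull of the spectrum $\Sigma_{b,v}$. By the two symmetries $(\lambda,\alpha)\mapsto(-\lambda,-\alpha)$ (the phase shift $\theta\mapsto\theta+\tfrac12$, which preserves the spectrum since it is $\theta$-independent) and $(\lambda,E)\mapsto(-\lambda,-E)$ (the gauge $u_n\mapsto(-1)^nu_n$, giving $\Sigma(-\lambda,\alpha)=-\Sigma(\lambda,\alpha)$), both of which preserve the hypotheses stated in terms of $|\lambda|,|\alpha|$, it suffices to treat $\lambda>0$ and $\alpha\in(0,1)$. First I would compute the range of the potential: with $c=\cos2\pi\theta\in[-1,1]$, the map $c\mapsto c/(1-\alpha c)$ has derivative $(1-\alpha c)^{-2}>0$ and is therefore increasing, so
$$v(\mathbb T)=[\,v_{\min},v_{\max}\,]=\Big[-\tfrac{2\lambda}{1+\alpha},\ \tfrac{2\lambda}{1-\alpha}\Big].$$
By Theorem \ref{theorem3.2}, in this case $L(b,A)>0\iff E>E_0$ and $L(b,A)=0\iff E\le E_0$, where $E_0:=2(1-\lambda)/\alpha$ is the value at which the curve $\alpha E=2(1-\lambda)$ meets the energy axis.

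Next I would sandwich the spectrum between the potential and its $\pm2$ translates. Writing $H=\Delta+V$ with $\Delta$ the adjacency operator ($\sigma(\Delta)=[-2,2]$) and $V$ multiplication by $v(nb+\theta)$ ($\sigma(V)=[v_{\min},v_{\max}]$), the variational description of the extreme points of a bounded self-adjoint operator gives the containment $\Sigma_{b,v}\subseteq[\,v_{\min}-2,\ v_{\max}+2\,]$. For the reverse direction I would test against the $\delta_n$: since $\langle H\delta_n,\delta_n\rangle=v(nb+\theta)$ and the orbit $\{nb+\theta\}$ is dense (so its supremum of $v$-values is $v_{\max}$ and its infimum is $v_{\min}$), the same principle yields the reachability bounds
$$\min\Sigma_{b,v}\le v_{\min},\qquad \max\Sigma_{b,v}\ge v_{\max}.$$
I expect this two-sided estimate to be the crux of the argument: the trivial containment already controls the pure regimes, but the coexistence regime additionally requires that the spectrum genuinely spreads out to near $v_{\min}$ and $v_{\max}$, which is exactly the content of the reachability bounds and is where minimality of the rotation must be used.

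Finally I would convert each hypothesis on $|\lambda|$ into an inequality between $E_0$ and these bounds. For the pure positive regime, $\min\Sigma_{b,v}\ge v_{\min}-2>E_0$ forces $L>0$ at every spectral energy, and an elementary computation gives $v_{\min}-2>E_0\iff\lambda>(1+\alpha)^2$. Symmetrically, $\max\Sigma_{b,v}\le v_{\max}+2\le E_0$ forces $L\equiv0$ on the spectrum, with $v_{\max}+2\le E_0\iff\lambda\le(1-\alpha)^2$. For coexistence, the reachability bounds produce a spectral point $\min\Sigma_{b,v}\le v_{\min}\le E_0$ (zero exponent) as soon as $v_{\min}\le E_0\iff\lambda<1+\alpha$, and a spectral point $\max\Sigma_{b,v}\ge v_{\max}>E_0$ (positive exponent) as soon as $v_{\max}>E_0\iff\lambda>1-\alpha$; hence $1-\alpha<\lambda<1+\alpha$ delivers both. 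All four threshold constants $1\pm\alpha$ and $(1\pm\alpha)^2$ thus emerge from these elementary comparisons, and the remaining sign cases $\lambda<0$ and $\alpha<0$ follow from the symmetries recorded above.
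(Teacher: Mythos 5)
Your proposal is correct and follows essentially the same route as the paper: both rest on the containment $\Sigma_{b,v}\subseteq[v_{\min}-2,v_{\max}+2]$ (Lemma \ref{lemma3.1}) together with the diagonal matrix elements $\left<\delta_n,H_{\alpha,\theta,b}\delta_n\right>=v(nb+\theta)$ and density of the orbit to force spectrum on both sides of the threshold $E_0=2(1-\lambda)/\alpha$, then translate the comparisons into the constants $1\pm\alpha$ and $(1\pm\alpha)^2$ via Theorem \ref{theorem3.2}. The only differences are cosmetic: you state the reachability bounds $\min\Sigma_{b,v}\le v_{\min}$, $\max\Sigma_{b,v}\ge v_{\max}$ directly from the variational principle where the paper runs the same fact as a contradiction through the spectral measures $\mu_{\delta_n}$, and you make the reduction to $\lambda>0$, $\alpha>0$ explicit by symmetries where the paper simply asserts the other cases are identical.
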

\begin{corollary}\label{corollary1.6}\quad
Consider the potential $v(\theta)=2\lambda tan^2\pi\theta$, there are both zero Lyapunov exponent and positive Lyapunov exponent in the spectrum if $0<|\lambda|<1$.
\end{corollary}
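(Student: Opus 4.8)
The plan is to reduce Corollary \ref{corollary1.6} to Theorem \ref{theorem3.3} by recognizing $v(\theta)=2\lambda\tan^2\pi\theta$ as the boundary member $\alpha=-1$ of the family \eqref{mypotential}, up to an additive constant. Using the half-angle identities $\cos^2\pi\theta=(1+\cos2\pi\theta)/2$ and $\sin^2\pi\theta=(1-\cos2\pi\theta)/2$ one gets $\tan^2\pi\theta=(1-\cos2\pi\theta)/(1+\cos2\pi\theta)$, and a short computation matching the $1/(1+\cos2\pi\theta)$ terms and the constants yields
\begin{equation*}
2\lambda\tan^2\pi\theta=2\lambda'\,\frac{\cos2\pi\theta}{1-\alpha\cos2\pi\theta}+2\lambda,\qquad \alpha=-1,\ \lambda'=-2\lambda .
\end{equation*}
Thus the tangent-squared potential is exactly the potential \eqref{mypotential} with parameters $\alpha=-1$, $\lambda'=-2\lambda$, shifted by the constant $2\lambda$.

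I would then note that adding a constant $c_0$ to the potential leaves the transfer cocycle unchanged after $E\mapsto E-c_0$, since the eigenvalue equation $u_{n+1}+u_{n-1}=(E-v(\theta))u_n$ depends only on $E-v$. Hence the spectrum is merely translated, $\Sigma_{\tan^2}=\Sigma_{\mathrm{model}}+2\lambda$, and the Lyapunov exponents agree under this shift, $L_{\tan^2}(E)=L_{\mathrm{model}}(E-2\lambda)$; in particular coexistence of zero and positive Lyapunov exponent holds for one operator if and only if it holds for the other. It then remains to invoke Theorem \ref{theorem3.3} at $\alpha=-1$: since $|\alpha|=1$, the coexistence range $1-|\alpha|<|\lambda'|<1+|\alpha|$ becomes $0<|\lambda'|<2$, and with $|\lambda'|=2|\lambda|$ this is precisely $0<|\lambda|<1$, the hypothesis of the corollary. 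Therefore both zero and positive Lyapunov exponent occur in the spectrum.

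The main obstacle is that $\alpha=-1$ sits on the boundary of the admissible range $\alpha\in(-1,1)$ for which Theorems \ref{theorem2.8}--\ref{theorem3.3} are proved, and there $v$ is unbounded: the denominator $1+\cos2\pi\theta=2\cos^2\pi\theta$ vanishes to second order at $\theta=1/2$, so the pole of $v$ reaches the real torus. The resolution I envisage is a continuity/limiting argument as $\alpha\to-1^{+}$, whose point is that the singularity is only logarithmic at the level of the cocycle. Writing $\theta=\tfrac12+s$ gives $v\sim 2\lambda/(\pi s)^2$, so $\log\|A(\theta)\|\sim-2\log|s|$ is integrable and $L(b,A)=\lim_n\frac1n\int\log\|A_n\|$ stays finite and well defined; moreover $\int_0^1\log|1+\cos2\pi\theta|\,d\theta=-\log2$ is finite, so the closed form in Theorem \ref{theorem2.8} extends continuously to $\alpha=-1$. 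Combining upper semicontinuity of the Lyapunov exponent in the cocycle with this explicit limit, together with continuity of the spectrum $\Sigma_{b,v}$ in $\alpha$, should show that the thresholds of Theorem \ref{theorem3.2} and the coexistence criterion of Theorem \ref{theorem3.3} persist at $\alpha=-1$. Making this boundary passage rigorous is the technical crux; once it is in place, the corollary follows from the algebraic reduction above.
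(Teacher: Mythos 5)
Your proposal is essentially the paper's own proof: the paper writes $2\lambda\tan^2\pi\theta=2\lambda\frac{1-\cos2\pi\theta}{1+\cos2\pi\theta}=2\lambda-4\lambda\frac{\cos2\pi\theta}{1+\cos2\pi\theta}$, reads off the new coupling $-2\lambda$, the shifted energy $E-2\lambda$ and the parameter $\alpha=-1$, and concludes by invoking Theorem \ref{theorem3.3}, exactly as in your first two paragraphs. The boundary difficulty you flag --- that Theorems \ref{theorem2.8}--\ref{theorem3.3} are stated only for $\alpha\in(-1,1)$, while $\alpha=-1$ puts the pole of $v$ on $\mathbb{T}$ --- is genuine, but the paper does not address it at all: it applies Theorem \ref{theorem3.3} at $\alpha=-1$ verbatim (just as it applies Theorem \ref{theorem2.8} at $\alpha=\pm1$ in Corollary \ref{corollary2.10}), so on this point your proposal, which at least sketches a limiting argument in $\alpha$, is more scrupulous than the published one.
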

This potential is infinite and has a singular point in $\mathbb{T}$.
\begin{theorem}\label{theorem3.17}\quad 
Suppose $b\in\mathbb{T}$ is Diophantine and $\theta\in\mathbb{T}$ is non-resonant. For the model (\ref{mymodel}), spectrum in the region of positive Lyapunov exponent is purely pure point spectrum with exponentially decaying eigenfunctions (Anderson localization).
\end{theorem}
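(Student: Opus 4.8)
The plan is to run the nonperturbative localization scheme of Bourgain and Goldstein, which is tailored precisely to analytic quasi-periodic cocycles with positive Lyapunov exponent and Diophantine frequency. The first point to record is that for $|\alpha|<1$ the denominator $1-\alpha\cos(2\pi\theta)$ never vanishes, so the potential $v$ in (\ref{mypotential}), and hence the transfer cocycle
$$A_E(\theta)=\begin{pmatrix} E-v(\theta) & -1 \\ 1 & 0\end{pmatrix},$$
extends holomorphically to a strip $|\mathrm{Im}\,\theta|<\rho$ about the real torus. This analyticity is exactly what makes the whole argument nonperturbative. Moreover the Lyapunov exponent in the relevant region is already known explicitly: by Theorem \ref{theorem2.8} and Theorem \ref{theorem3.2} I may take $L(E)>0$ as given for every energy $E$ in the region cut out by the mobility edge, and work at those energies only.

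Next I would establish the Large Deviation Theorem (LDT) for the $n$-step transfer matrices $M_n(E,\theta)=A_E(\theta+(n-1)b)\cdots A_E(\theta)$: for each such $E$ and each $\epsilon>0$,
$$\mathrm{mes}\Big\{\theta: \big|\tfrac{1}{n}\log\|M_n(E,\theta)\|-L_n(E)\big|>\epsilon\Big\}<e^{-c\,n^{\sigma}}$$
for some $\sigma>0$ and all large $n$, where $L_n(E)$ is the finite-scale mean of $\tfrac1n\log\|M_n\|$. This follows from the subharmonicity of $\theta\mapsto\tfrac1n\log\|M_n(E,\theta)\|$ in the strip, a sharp decay estimate on its Fourier coefficients, and the avalanche principle, which couples scale $n$ to scale $2n$ and upgrades positivity of $L$ into an almost-constancy statement with exponentially small exceptional measure. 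Since all of this is carried out at a fixed energy, restricting to the positive-Lyapunov region poses no difficulty.

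With the LDT in hand I would pass to Green's function estimates. Expressing the entries of the resolvent $G_\Lambda(E)=(H_\Lambda-E)^{-1}$ on a finite interval $\Lambda=[-N,N]$ through Cramer's rule in terms of transfer-matrix determinants, the LDT yields off-diagonal exponential decay $|G_\Lambda(E)(x,y)|\le e^{-L(E)|x-y|/2}$ for a set of phases of measure $1-e^{-cN^{\sigma}}$, provided $E$ stays away from the eigenvalues of $H_\Lambda$. The decisive remaining step is the removal of resonances. Here I would use the Diophantine condition on $b$ to control the return times of the orbit $\{\theta+nb\}$ to any small arc, and the non-resonance condition on $\theta$ to exclude the \emph{double-resonance} configuration in which a generalized eigenfunction carries comparable mass on two far-separated sites. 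Quantitatively this is achieved by the semi-algebraic set technique: the set of energies for which the box $\Lambda$ is ``singular'' is semi-algebraic of controlled degree, so the number of times an orbit segment of length $N$ meets it is governed by the Diophantine exponent of $b$, and resonances at two separated scales are eliminated simultaneously.

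Finally I would conclude by Shnol's theorem that the spectrum in the positive region is carried by generalized eigenvalues admitting polynomially bounded generalized eigenfunctions; for each such energy the resonance-free Green's function decay forces any polynomially bounded solution to decay exponentially, hence to be a genuine $\ell^2$ eigenfunction. This makes the spectral measure in the region pure point with exponentially decaying eigenfunctions, and rules out any continuous part, giving the ``purely pure point'' conclusion. The main obstacle, as always in this scheme, is the resonance-elimination step: one must show that for non-resonant $\theta$ the two-sided exponential decay cannot be destroyed by a second localization center, and making the semi-algebraic counting uniform in the energy parameter across the whole positive-Lyapunov region is where the bulk of the technical work lies.
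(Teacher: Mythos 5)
Your outline is a faithful sketch of the Bourgain--Goldstein nonperturbative scheme, and that scheme does apply to this potential (for $|\alpha|<1$ the potential is bounded and real analytic, so the cocycle extends analytically to a strip). But there is a genuine mismatch between what that scheme proves and what the theorem asserts. The Bourgain--Goldstein elimination of double resonances via semi-algebraic sets removes an \emph{implicit} measure-zero set of frequencies: the conclusion is ``for fixed phase and almost every $b$, Anderson localization holds in the positive-Lyapunov region,'' where the exceptional frequency set depends on the potential and the phase and is not characterized arithmetically. The theorem you are asked to prove is an \emph{arithmetic} statement: localization for \emph{every} Diophantine $b$ and \emph{every} non-resonant $\theta$, with non-resonance in the precise sense of Definition \ref{definition3.5}. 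Your proposal cannot deliver this: the semi-algebraic counting does not show that the good frequency set contains all Diophantine numbers, and the phase non-resonance condition plays no role in that machinery at all --- in Bourgain--Goldstein the phase is simply fixed, and your attempt to let ``non-resonant $\theta$'' perform the double-resonance exclusion inside the semi-algebraic step conflates two different mechanisms that do not interface as described.

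The paper's proof is instead a Jitomirskaya-style argument (following \cite{J}) built exactly around those arithmetic conditions. The key structural input is Lemma \ref{lemma3.8}: after multiplying the finite-box determinant $P_k(\theta,E)$ by $\prod_{j=0}^{k-1}(1-\alpha\cos(2\pi(\theta+jb)))$, one obtains a polynomial of degree $k$ in $\cos(2\pi(\theta+\frac{k-1}{2}b))$. Double resonances are then excluded by Lagrange interpolation at $k+1$ sample points along the orbit (Lemma \ref{lemma3.16}): if two far-separated sites were both singular, the interpolation bound --- which is where the Diophantine condition on $b$ and the non-resonance of $\theta$ enter, via Lemma 7 of \cite{J} --- would force $|P_k|$ to be too small at every phase, contradicting the lower bound on good scales supplied by Kingman's theorem (Lemma \ref{lemma3.10}); the extra product of denominators is controlled by the uniform convergence of $\frac{1}{k}\sum_{j}\log(1-\alpha\cos(2\pi(\theta+jb)))$. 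This exploits the specific cosine structure of the potential, which your route never uses, and it is precisely what yields the statement for all Diophantine $b$ and all non-resonant $\theta$ rather than for an unspecified full-measure set. Carried out carefully, your approach would still give the weaker consequence noted after the theorem (localization for almost every frequency and phase), but not the theorem itself.
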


It is known that Lebesgue almost every $b\in\mathbb{T}$ is Diophantine and Lebesgue almost every $\theta\in\mathbb{T}$ is non-resonant. Thus the theorem holds for almost every frequency and almost every phase.

\section{The formula of Lyapunov exponent}
In this section, we use Herman's subharmonicity methods and Avila's global theroy to obtain the formula of Lyapunov exponent in the spectrum.

Given a bounded map $v:\mathbb{Z}\to\mathbb{R}$, the solutions of Schr$\ddot{o}$dinger equation 
\begin{equation}\label{equation2.1}
	(H_{\alpha,\theta,b}u)_{n}=u_{n+1}+u_{n-1}+v(nb+\theta)u_{n}=zu_{n},
\end{equation}
which, for $n\ge 1$, can be expressed as
\begin{equation}
	\begin{pmatrix}u_{n+1}\\u_{n}\end{pmatrix}=A_{n}(\theta,z)\begin{pmatrix}u_{1}\\u_{0}\end{pmatrix},
\end{equation}
\begin{equation*}
	A_{n}(\theta,z)=A(\theta+(n-1)b)\dots A(\theta),\ A_{-n}(\theta,z)=A_{n}
	(\theta-nb)^{-1},\ n>0.
\end{equation*}
\begin{equation}
	A(\theta,z)=A^{(z-v)}(\theta)=\begin{pmatrix}z-v(\theta)&-1\\1&0\end{pmatrix},
	\ \theta\in\mathbb{T},\ z\in\mathbb{Z}.
\end{equation}
The spectrum $\sigma(H_{\alpha,\theta,b})$ of the operator $H_{\alpha,\theta,b}$ is a nonempty, compact subset of $\mathbb{R}$. When $b$ is irrational it is the same for almost every $\theta\in\mathbb{T}$, we denote it by $\sum\nolimits_{b,v}$.
The Lyapunov exponent as usual is defined by

\begin{equation}\label{equation2.4}
	L(z)=L(b,A^{(z-v)})=\mathop{lim}\limits_{n\rightarrow\infty}\int_{\mathbb{T}}\frac{1}{n}log
	\parallel A_{n}(\theta,z)\parallel d\theta.
\end{equation}
The Lyapunov exponent defined above is non-negative since $det(A_{n})=1$ for every $n\in\mathbb{Z}$.

Now, we use Herman's subharmonicity argument and extensions to estimate a lower bound of the Lyapunov exponent of the model (\ref{mymodel}).
\begin{theorem}\label{theorem2.1}\quad For the model (\ref{mymodel}), the Lyapunov exponent has a lower bound at all energies $z\in\mathbb{C}$:
	$$L(z)\ge max\{log|\frac{\alpha z+2\lambda\pm\sqrt{(\alpha z+2\lambda)^2-4\alpha^2}}{2(1+\sqrt{1-\alpha^2})}|,0\}$$
\end{theorem}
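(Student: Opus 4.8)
The plan is to remove the rational pole of $v$ so that the cocycle becomes a genuine trigonometric-polynomial cocycle, for which Herman's subharmonicity argument applies, and to pay for this reduction by an explicit scalar term computed via Jensen's formula. Writing $w=e^{2\pi i\theta}$ and $\cos2\pi\theta=\tfrac12(w+w^{-1})$, clearing denominators in $z-v(\theta)=\frac{z-(\alpha z+2\lambda)\cos2\pi\theta}{1-\alpha\cos2\pi\theta}$ by multiplying by $2w$ gives
\[ z-v(\theta)=\frac{N(w)}{D(w)},\quad D(w)=-\alpha w^{2}+2w-\alpha,\quad N(w)=-(\alpha z+2\lambda)w^{2}+2zw-(\alpha z+2\lambda). \]
Since $D$ is a common scalar factor, I would set $\tilde A(\theta)=D(w)\,A(\theta)=\begin{pmatrix}N&-D\\ D&0\end{pmatrix}$, which is entire (polynomial) in $w$. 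Because $D$ is scalar, $\tilde A_{n}(\theta)=\big(\prod_{j=0}^{n-1}D(w_{j})\big)A_{n}(\theta)$ with $w_{j}=e^{2\pi i(\theta+jb)}$; taking logarithms, dividing by $n$, integrating over $\mathbb T$ and using unique ergodicity of the irrational rotation yields the exact identity $L(z)=L_{\tilde A}(z)-\int_{\mathbb T}\log|D(e^{2\pi i\theta})|\,d\theta$.

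The scalar correction is computed by Jensen's formula. Factoring $D(w)=-\alpha(w-r_{+})(w-r_{-})$ with $r_{\pm}=(1\pm\sqrt{1-\alpha^{2}})/\alpha$ and $r_{+}r_{-}=1$, exactly one root lies outside the unit circle, so $\int_{\mathbb T}\log|D|\,d\theta=\log|\alpha|+\log^{+}|r_{+}|+\log^{+}|r_{-}|=\log(1+\sqrt{1-\alpha^{2}})$, which is precisely the denominator in the claimed bound.

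It then remains to show $L_{\tilde A}(z)\ge\log\rho(B)$, where $B=\begin{pmatrix}-(\alpha z+2\lambda)&\alpha\\ -\alpha&0\end{pmatrix}$ is simultaneously the leading ($w^{2}$) and the constant coefficient of $\tilde A$ (the polynomials $N,D$ are palindromic), with $\mathrm{tr}\,B=-(\alpha z+2\lambda)$, $\det B=\alpha^{2}$, and eigenvalues $\tfrac12\big(-(\alpha z+2\lambda)\pm\sqrt{(\alpha z+2\lambda)^{2}-4\alpha^{2}}\big)$. Complexifying $\theta\mapsto\theta+it$, so that $|w|=e^{-2\pi t}$, set $u(t)=L_{\tilde A}(z,t)=\lim_{n}\tfrac1n\int_{\mathbb T}\log\|\tilde A_{n}(\theta+it)\|\,d\theta$. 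Subharmonicity of $\zeta\mapsto\log\|\tilde A_{n}(\zeta)\|$ makes $u$ convex in $t$. As $t\to+\infty$ one has $w\to0$ and $\tilde A(\theta+it)\to B$ uniformly, so $u(t)\to\log\rho(B)$ with limiting slope $0$; convexity then forces $u$ to be nonincreasing, whence $L_{\tilde A}(z)=u(0)\ge\lim_{t\to+\infty}u(t)=\log\rho(B)$. (The opposite end $t\to-\infty$, where $\tilde A\sim w^{2}B$, confirms the left slope $-4\pi$ and the same intercept.) Combining with the identity of the first paragraph and the elementary $L(z)\ge0$ produces exactly the stated $\max\{\cdot,0\}$ bound.

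The main technical point is the asymptotic in Step 3, namely $\liminf_{t\to+\infty}u(t)\ge\log\rho(B)$; the reverse inequality is immediate from subadditivity and upper semicontinuity of the exponent. I expect to handle this by a dichotomy on $B$. If the eigenvalues of $B$ have distinct moduli, the limiting constant cocycle is hyperbolic, the exponent is continuous there, and the bound follows. If the eigenvalues have equal modulus, then $\rho(B)=|\det B|^{1/2}=|\alpha|$, and the elementary estimate $\|M\|\ge|\det M|^{1/2}$ for $2\times2$ matrices gives $u(t)\ge\int_{\mathbb T}\log|D(\theta+it)|\,d\theta\to\log|\alpha|=\log\rho(B)$ as $t\to+\infty$. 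In both cases the convex function $u$ has slope tending to $0$, which is the genuine substance of Herman's argument, and it is here that Avila's global theory (quantization of the acceleration and the control of the extreme Fourier modes) makes the limiting slope computation rigorous.
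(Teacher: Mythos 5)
Your first half is exactly the paper's proof: you clear the denominator (your $\tilde A=D\cdot A$ is precisely the paper's $\omega B(\theta,z)$) and evaluate the scalar correction $\int_{\mathbb{T}}\log|D|\,d\theta=\log(1+\sqrt{1-\alpha^{2}})$ by Jensen's formula, just as the paper does. Where you genuinely diverge is the key inequality $L_{\tilde A}(z)\ge\log\rho(B)$. The paper gets it in one line from the sub-mean-value property of subharmonic functions: since $N_{n}(\omega)=\omega^{n}B_{n}(\theta,z)$ is entire in $\omega$ and the unit circle has center $0$,
\begin{equation*}
\int_{\mathbb{T}}\frac1n\log\|B_{n}(\theta,z)\|\,d\theta=\int_{\mathbb{T}}\frac1n\log\|N_{n}(\omega)\|\,d\theta\ \ge\ \frac1n\log\|N_{n}(0)\|,
\end{equation*}
and $N_{n}(0)=B^{n}$ in your notation, so the right-hand side tends to $\log\rho(B)$ by the spectral radius formula --- valid at every fixed $n$, with no interchange of limits, no continuity theorem for the Lyapunov exponent, and no case analysis. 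You instead complexify, use convexity of $u(t)$, and pin down $\lim_{t\to+\infty}u(t)=\log\rho(B)$; this is correct (convexity plus boundedness above on $[0,\infty)$ does force $u$ to be nonincreasing, and both branches of your dichotomy go through), but it is strictly heavier: the distinct-moduli branch requires continuity of the exponent at uniformly hyperbolic constant cocycles (or, alternatively, Jitomirskaya--Marx continuity for analytic $M_{2}(\mathbb{C})$ cocycles), a nontrivial input that the paper's proof of Theorem \ref{theorem2.1} never touches. What your route buys is coherence with the rest of the paper: the convexity of $u$, its asymptotics $u(t)\to\log\rho(B)$ at $+\infty$ and $u(t)\sim-4\pi t+\log\rho(B)$ at $-\infty$, and the quantized slopes are exactly the global-theory mechanism the paper deploys for the sharp formula in Theorem \ref{theorem2.8}, so your argument unifies the lower bound and the exact formula at the cost of elementarity. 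Two small corrections: the identity $L(z)=L_{\tilde A}(z)-\int_{\mathbb{T}}\log|D|\,d\theta$ needs only rotation invariance of Lebesgue measure, not unique ergodicity, because the exponent (\ref{equation2.4}) is already defined as an integral over $\theta$ (so $\int_{\mathbb{T}}\frac1n\sum_{j}\log|D(w_{j})|\,d\theta=\int_{\mathbb{T}}\log|D|\,d\theta$ exactly, for every $n$); and it is worth saying explicitly that $|\alpha|<1$ keeps the roots $r_{\pm}$ off the unit circle, so $\log|D|$ is continuous on $\mathbb{T}$ and no integrability issue arises in that identity.
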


\begin{proof}\quad
	Note that
	\begin{equation*}
		\begin{split}
			A(\theta,z) &=\begin{pmatrix}z-2\lambda\frac{cos(2\pi\theta)}{1-\alpha cos(2\pi\theta)}&-1\\1&0\end{pmatrix} \\
			&=\frac{1}{2-\alpha e^{2\pi\theta i}-\alpha e^{-2\pi\theta i}}\begin{pmatrix}2z-(\alpha z+2\lambda)(e^{2\pi\theta i}+e^{-2\pi\theta i})&-2+\alpha e^{2\pi\theta i}+\alpha e^{-2\pi\theta i}\\2-\alpha e^{2\pi\theta i}-\alpha e^{-2\pi\theta i}&0\end{pmatrix}.
		\end{split}
	\end{equation*}
Denote
\begin{equation*}
\begin{split}
B(\theta,z)&=(2-\alpha e^{2\pi\theta i}-\alpha e^{-2\pi\theta i})A(\theta,z) \\&=\begin{pmatrix}2z-(\alpha z+2\lambda)(e^{2\pi\theta i}+e^{-2\pi\theta i})&-2+\alpha e^{2\pi\theta i}+\alpha e^{-2\pi\theta i}\\2-\alpha e^{2\pi\theta i}-\alpha e^{-2\pi\theta i}&0\end{pmatrix}.
\end{split}
\end{equation*}
Similarly, we can define
 $B_{n}(\theta,z)=B(\theta+(n-1)b)\dots B(\theta)$.
 
According to Birkhoff theorem and Jensen theorem, we have
	$$L(z)=\mathop{lim}\limits_{n\rightarrow\infty}\int_{\mathbb{T}}\frac{1}{n}log\parallel B_{n}(\theta,z)\parallel d\theta-\int_{\mathbb{T}}log|2-\alpha e^{2\pi\theta i}-\alpha e^{-2\pi\theta i}|d\theta,$$
and
	$$\int_{\mathbb{T}}log|2-\alpha e^{2\pi\theta i}-\alpha e^{-2\pi\theta i}|d\theta=log|1+\sqrt{1-\alpha^2}|.$$	
Setting $\omega=e^{2\pi\theta i}$, we find that
\begin{equation*}
\begin{split}
\omega B(\theta,z)&=\begin{pmatrix}2z\omega-(\alpha z+2\lambda)(\omega^2+1)&-2\omega+\alpha\omega^2+\alpha\\2\omega-\alpha\omega^2-\alpha&0\end{pmatrix}
\end{split}.
\end{equation*}
We define a new function
	$$N_{n}(\omega)=\omega^nB_{n}(\theta,z),$$
initially on $|\omega|=1$, then $N_{n}$ extends to an entire function and hence $\omega\mapsto log\parallel N_{n}(\omega)\parallel$ is subharmonic. 
Hence it holds that
	\begin{equation*}
		\begin{split}
			\int_{\mathbb{T}}\frac{1}{n}log\parallel B_{n}(\theta,z)\parallel d\theta&=\int_{\mathbb{T}}\frac{1}{n}log\parallel\omega^{n} B_{n}(\theta,z)\parallel d\theta\\&=\int_{\mathbb{T}}log\parallel N_{n}(\omega)\parallel d\theta \\&\ge log\parallel N_{n}(0)\parallel\\
			&=log\parallel\begin{pmatrix}-\alpha z-2\lambda&\alpha\\-\alpha&0\end{pmatrix}^{n}\parallel.
		\end{split}
	\end{equation*}
Therefor,
	\begin{equation*}
		\begin{split}
			L(z)&=\mathop{lim}\limits_{n\rightarrow\infty}\int_{\mathbb{T}}\frac{1}{n}log\parallel B_{n}(\theta,z)\parallel d\theta-log|1+\sqrt{1-\alpha^2}|\\
			&=\mathop{lim}\limits_{n\rightarrow\infty}\int_{T}\frac{1}{n}log\parallel N_{n}(\omega)\parallel d\theta-log|1+\sqrt{1-\alpha^2}|\\
			&\ge\mathop{lim}\limits_{n\rightarrow\infty}log\parallel\begin{pmatrix}-\alpha z-2\lambda&\alpha\\-\alpha&0\end{pmatrix}^{n}\parallel-log|1+\sqrt{1-\alpha^2}|\\
			&=max\{log|\frac{\alpha z+2\lambda\pm\sqrt{(\alpha z+2\lambda)^2-4\alpha^2}}{2(1+\sqrt{1-\alpha^2})}|\}.
		\end{split}
	\end{equation*}
	Due to the non-negativity of the Lyapunov exponent, we have
	$$L(z)\ge max\{log|\frac{\alpha z+2\lambda\pm\sqrt{(\alpha z+2\lambda)^2-4\alpha^2}}{2(1+\sqrt{1-\alpha^2})}|,0\}.$$
\end{proof} 
Denote
$$A_{\varepsilon}(\theta,z)=A(\theta+\varepsilon i,z),\ \theta \in\mathbb{T},\ z\in\mathbb{C},\ \varepsilon\in\mathbb{R},$$
it is easy to know that 
$2-\alpha e^{2\pi (\theta+\varepsilon i)i}-\alpha e^{-2\pi (\theta+\varepsilon i)i}$ has at most two real roots about $\varepsilon$ and they respectively satisfy
$$e^{2\pi\theta i}=e^{2\pi\varepsilon}\frac{1+\sqrt{1-\alpha^2}}{\alpha}\ or \ e^{2\pi\theta i}=e^{2\pi\varepsilon}\frac{1-\sqrt{1-\alpha^2}}{\alpha}.$$
Take modulus
$$1=e^{2\pi\varepsilon}|\frac{1+\sqrt{1-\alpha^2}}{\alpha}|\ or\ 1=e^{2\pi\varepsilon}|\frac{1-\sqrt{1-\alpha^2}}{\alpha}|.$$
When $|\varepsilon|<\frac{1}{2\pi}log|\frac{1+\sqrt{1-\alpha^2}}{\alpha}|$, we have
$$1<e^{2\pi\varepsilon}|\frac{1+\sqrt{1-\alpha^2}}{\alpha}|\ and \ 1>e^{2\pi\varepsilon}|\frac{1-\sqrt{1-\alpha^2}}{\alpha}|,$$
it means that $2-\alpha e^{2\pi (\theta+\varepsilon i)i}-\alpha e^{-2\pi (\theta+\varepsilon i)i}$ has no real roots about $\varepsilon$ under this condition. Thus, $A(\cdot,z)\in C^{\omega}(\mathbb{R}/\mathbb{Z},SL(2,\mathbb{C}))$ admits a holomorphic extension to $|Im\cdot|<\frac{1}{2\pi}log|\frac{1+\sqrt{1-\alpha^2}}{\alpha}|$ and $A_{\varepsilon}(\cdot,z)\in C^{\omega}(\mathbb{R}/\mathbb{Z},SL(2,\mathbb{C}))$ is well-defined.
Denote $\delta=\frac{1}{2\pi}log|\frac{1+\sqrt{1-\alpha^2}}{\alpha}|$, by Jensen theorem, we have
$$\int_{\mathbb{T}}log|2-\alpha e^{2\pi (\theta+\varepsilon i)i}-\alpha e^{-2\pi (\theta+\varepsilon i)i}|d\theta=\begin{matrix*}[l]log|1+\sqrt{1-\alpha^2}|&(if\ |\varepsilon|<\delta)\end{matrix*}.$$
When $|\varepsilon|<\delta$, it holds that
\begin{equation}\label{equation2.5}
	\begin{split}
		L(b,A_{\varepsilon})&=L(b,B_{\varepsilon})-\int_{\mathbb{T}}log|2-\alpha e^{2\pi (\theta+\varepsilon i)i}-\alpha e^{-2\pi (\theta+\varepsilon i)i}|d\theta\\
		&=L(b,B_{\varepsilon})-log|1+\sqrt{1-\alpha^2}|.
	\end{split}
\end{equation}

We consider the class of 1-periodic functions on $\mathbb{R}$ which have analytic extension to some strip $|\Im z|<\eta$ and take values in complex 2$\times$ 2 matrices. We denote them by $C^{\omega}(\mathbb{T},M_{2}(\mathbb{C}))$.

Cocycles $(b,D)$ with D $\in C^{\omega}(\mathbb{T},M_{2}(\mathbb{C}))$ are called analytic. For analytic cocycles, we have a similar definition of Lyapunov exponent $L(b,D):\mathbb{T}\times C^{\omega}(\mathbb{T},M_{2}(\mathbb{C})) \rightarrow (-\infty,\infty)$ and it is jointly continuous at every $(b,D)$ with $b\in\mathbb{R}\backslash\mathbb{Q}$\cite{BJ}\cite{JKS}\cite{JSC}. Given any analytic cocycle $(b,D)$, we consider its holomorphic extension $(b,D_{\varepsilon})$ with $|\varepsilon|\le\eta$.

The Lyapunov exponent $L(b,D_{\varepsilon})$ is easily seen to be a convex function of $\varepsilon$. Thus we can introduce the acceleration of $(b,D_{\varepsilon})$.

\begin{equation}\label{acceleration}\omega(b,D_{\varepsilon})=\frac{1}{2\pi}\mathop{lim}_{h\rightarrow0^{+}}\frac{L(b,D_{\varepsilon+h})-L(b,D_{\varepsilon})}{h}.
\end{equation}

It follows from convexity and continuity of the Lyapunov exponent that the acceleration is an upper semicontinuous function in parameter $\varepsilon$.

\begin{definition}\label{definition2.2}\quad $(b,A)\in(\mathbb{R}\backslash\mathbb{Q})\times C^{\omega}(\mathbb{R}/\mathbb{Z},SL(2,\mathbb{C}))$ is regular if $L(b,A_{\varepsilon})$ is affine for $\varepsilon$ in a neighborhood of 0.
\end{definition}

\begin{remark}\label{remark2.3}\quad If $A$ takes values in $SL(2,\mathbb{R})$, then $\varepsilon\mapsto L(b,A_{\varepsilon})$ is an even function. By convexity, $\omega(b,A)\ge0$. And if $b\in\mathbb{R}\backslash\mathbb{Q}$, then $(b,A)$ is regular if and only if $\omega(b,A)=0$.
\end{remark}

The acceleration was first introduced and the above results were proved in \cite{A} for analytic $SL(2,\mathbb{C})-cocyles$. It was extened to the gengral case $M_{2}(\mathbb{C})$ in \cite{JSC}. 

The acceleration satisfies the following theorem:
\begin{theorem}[Quantization of acceleration \cite{A}\cite{AJS}\cite{JS}]\label{theorem2.4}\quad Consider cocycle $(b,D)$ with $detD(x)$ bound away from 0 on the strip $\mathbb{T}_{\varepsilon}=\{z:|\Im z|<\varepsilon \}$, then $\omega(b,D_{\varepsilon})\in \displaystyle\frac{1}{2}Z$. Morveover, $\omega(b,D_{\varepsilon})\in Z$ for $SL(2,\mathbb{C})-cocycles$.\end{theorem}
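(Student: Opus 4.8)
The plan is to follow the architecture of Avila's global theory, of which this quantization statement is the cornerstone. I would first recast the acceleration through finite-scale averages. Writing $D_n(\theta)=D(\theta+(n-1)b)\cdots D(\theta)$ and $u_n(\varepsilon)=\tfrac1n\int_{\mathbb{T}}\log\|D_n(\theta+i\varepsilon)\|\,d\theta$, the function $z\mapsto\log\|D_n(z)\|$ is subharmonic on the strip $\mathbb{T}_\varepsilon$, since it is the logarithm of the norm of a holomorphic matrix which is non-degenerate (because $\det D$ is bounded away from $0$). For a $1$-periodic subharmonic function $s$ the horizontal-slice average $\phi(\varepsilon)=\int_0^1 s(x+i\varepsilon)\,dx$ satisfies $\phi''(\varepsilon)=\int_0^1(\Delta s-\partial_x^2 s)\,dx=\int_0^1\Delta s\,dx\ge0$ by periodicity, so each $u_n$, and hence the limit $L(b,D_\varepsilon)=\lim_n u_n(\varepsilon)$, is convex. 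This is exactly what makes the one-sided derivative defining $\omega(b,D_\varepsilon)$ exist, and reduces the theorem to showing that the slopes of this convex, piecewise-affine function lie in $2\pi\cdot\tfrac12\mathbb{Z}$ (and in $2\pi\mathbb{Z}$ in the $SL(2,\mathbb{C})$ case).

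I would treat the $SL(2,\mathbb{C})$ case first, where $\det D\equiv1$ forces $L_1(\varepsilon)+L_2(\varepsilon)=0$ and it therefore suffices to quantize the single top exponent $L=L_1$. The governing mechanism is that on any open $\varepsilon$-interval where $L(b,D_\varepsilon)$ is affine and strictly positive the complexified cocycle $(b,D_\varepsilon)$ is uniformly hyperbolic, and hence carries invariant stable and unstable line fields $\theta\mapsto u(\theta+i\varepsilon)\in\mathbb{CP}^1$ that depend holomorphically on $\theta$ throughout the strip. The slope of $L$ on that interval equals $2\pi$ times the degree (winding number) of this holomorphic section over $\mathbb{T}$, an integer: raising $\varepsilon$ dilates the unstable direction at precisely the rate fixed by its degree. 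This yields $\omega(b,D_\varepsilon)\in\mathbb{Z}$ for $SL(2,\mathbb{C})$ cocycles.

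To pass to a general non-degenerate $D\in M_2(\mathbb{C})$ I would factor out the determinant, writing $D=(\det D)^{1/2}\,\widetilde D$ with $\widetilde D\in SL(2,\mathbb{C})$. Pulling the scalar out of the product gives $\log\|D_n\|=\tfrac12\sum_{j=0}^{n-1}\log|\det D(\theta+jb)|+\log\|\widetilde D_n\|$, and averaging (using translation invariance on $\mathbb{T}$) yields $L(b,D_\varepsilon)=\tfrac12\int_{\mathbb{T}}\log|\det D(\theta+i\varepsilon)|\,d\theta+L(b,\widetilde D_\varepsilon)$. Since $\det D$ is holomorphic and non-vanishing, $\log|\det D|$ is harmonic, so by the computation above (now with $\Delta=0$) the first integral is affine in $\varepsilon$ with slope $2\pi w$, where $w=\tfrac{1}{2\pi i}\oint(\det D)'/\det D\in\mathbb{Z}$ is the winding number of $\theta\mapsto\det D(\theta+i\varepsilon)$. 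Consequently $\omega(b,D_\varepsilon)=\tfrac{w}{2}+\omega(b,\widetilde D_\varepsilon)$ with $\omega(b,\widetilde D_\varepsilon)\in\mathbb{Z}$, so $\omega(b,D_\varepsilon)\in\tfrac12\mathbb{Z}$; the odd values of $w$ are exactly what produce genuine half-integers.

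The main obstacle is the $SL(2,\mathbb{C})$ step, namely the identification of the slope with an integer winding number. Convexity and subharmonicity only deliver the existence and monotonicity of $\omega$; they are blind to its arithmetic. What is genuinely required is the dichotomy that a regular (locally affine), positive-exponent cocycle becomes uniformly hyperbolic after complexification, together with the holomorphic continuation of the stable/unstable splitting and the evaluation of the acceleration as the degree of that splitting. This is the technical heart of the theory, and it is precisely what the cited works \cite{A}, \cite{AJS}, \cite{JS} establish; I would invoke them at this point rather than reprove the continuation from scratch.
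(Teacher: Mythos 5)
The paper offers no proof of Theorem \ref{theorem2.4} at all: it is quoted verbatim from Avila's global theory and its extensions (\cite{A}, \cite{AJS}, \cite{JSC}, \cite{JS}), so your proposal can only be measured against those published proofs, and against that benchmark it has two genuine gaps.

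First, your reduction is circular. You reduce the theorem to ``showing that the slopes of this convex, piecewise-affine function lie in $2\pi\cdot\tfrac12\mathbb{Z}$,'' but piecewise affineness of $\varepsilon\mapsto L(b,D_\varepsilon)$ is not available a priori: it is exactly the joint consequence of convexity \emph{and} quantization, i.e.\ of the theorem you are trying to prove. Before quantization is established, $L$ is merely convex; it could in principle have continuously varying slope (like $\varepsilon^2$), in which case there is not a single open interval on which it is affine, and your entire argument --- which only produces quantized slopes on affinity intervals with $L>0$ --- says nothing about the acceleration, which is defined and must be quantized at \emph{every} $\varepsilon$. Relatedly, the dichotomy ``locally affine $+$ positive exponent $\Rightarrow$ uniformly hyperbolic'' that you invoke is Lemma \ref{lemma2.6} of this paper, itself a theorem of Avila's global theory whose proof is developed together with quantization; before leaning on it you would have to check it is not downstream of the very statement at issue. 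The cited proofs avoid both problems by working pointwise: Avila \cite{A} approximates the frequency by rationals and analyzes $\int\log\rho(D_{(q)}(\theta+i\varepsilon))\,d\theta$ for the monodromy matrix via Jensen-type arguments, while \cite{AJS} uses a dominated splitting at parameters where $L_1>L_2$ and the harmonic function $\int\log|\det D(\theta+i\varepsilon)|\,d\theta$ where $L_1=L_2$, then passes to arbitrary $\varepsilon$ by a convexity limit argument on slopes.

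Second, your $M_2(\mathbb{C})$ step breaks down in exactly the case that produces half-integers. The factorization $D=(\det D)^{1/2}\widetilde D$ with $\widetilde D$ a one-periodic $SL(2,\mathbb{C})$-cocycle requires a global single-valued holomorphic branch of $(\det D)^{1/2}$ on the annulus, which exists if and only if the winding number $w$ of $\det D$ is even; for odd $w$ the matrix $\widetilde D$ is only defined up to sign (it lives on the double cover), so the identity $\omega(b,D_\varepsilon)=\tfrac{w}{2}+\omega(b,\widetilde D_\varepsilon)$ with $\omega(b,\widetilde D_\varepsilon)\in\mathbb{Z}$ is not established by your argument --- yet odd $w$ is the only way $\omega\notin\mathbb{Z}$ can occur, so as written you have proved nothing beyond the even case. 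Concretely, take $D(\theta)=\left(\begin{smallmatrix}0&1\\ e^{2\pi i\theta}&0\end{smallmatrix}\right)$: then $\det D=-e^{2\pi i\theta}$ has winding $1$, no global square root exists, $D_{2n}(\theta+i\varepsilon)$ is diagonal with entries of modulus $e^{-2\pi n\varepsilon}$, so $L(b,D_\varepsilon)=-\pi\varepsilon$ and $\omega\equiv-\tfrac12$. The standard repairs are either to pass to the double cover $\theta\mapsto2\theta$, where $(\det D)^{1/2}$ becomes single-valued, apply $SL(2,\mathbb{C})$-quantization there and track the resulting factor of two in the normalization, or, as in \cite{AJS}, to dispense with factorization and use the identity $L_1+L_2=\int\log|\det D|$ directly. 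With those two repairs (pointwise argument instead of affinity intervals; double cover or determinant identity instead of a global square root), your outline does match the architecture of the cited proofs.
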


\begin{lemma}\label{lemma2.5}\cite{A}\quad $E\notin\sum\nolimits_{b,v}$ if and only if $(b,A)$ is uniformly hyperbolic.\end{lemma}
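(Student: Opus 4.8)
The plan is to establish the two implications separately by exploiting the classical correspondence (Johnson) between the resolvent set of $H_{\alpha,\theta,b}$ and the exponential dichotomy, equivalently the uniform hyperbolicity, of the transfer cocycle $(b,A)=(b,A^{(E-v)})$. Recall that uniform hyperbolicity of $(b,A)$ means there is a continuous, cocycle-invariant splitting $\mathbb{C}^2=E^s(\theta)\oplus E^u(\theta)$ together with constants $C\ge1$, $c>0$ such that the iterates $A_n(\theta)$ contract on $E^s$ and expand on $E^u$ at rate $e^{cn}$, uniformly in $\theta\in\mathbb{T}$. Throughout I would use that, since $b$ is irrational, the base rotation is minimal, so that spectral and dynamical quantities are independent of $\theta$ and uniform estimates propagate over the whole torus.

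For the direction ``$(b,A)$ uniformly hyperbolic $\Rightarrow E\notin\sum\nolimits_{b,v}$'', I would use the stable and unstable directions to produce, for each $\theta$, a solution $u^{+}$ of the difference equation $(H_{\alpha,\theta,b}u)_n=Eu_n$ decaying exponentially as $n\to+\infty$ and a solution $u^{-}$ decaying as $n\to-\infty$; uniform hyperbolicity forces $E^s(\theta)\ne E^u(\theta)$, so their Wronskian $W$ is nonzero. Assembling the Green's kernel $G(n,m)=u^{+}(n\vee m)\,u^{-}(n\wedge m)/W$ and bounding it through the exponential decay of $u^{\pm}$ yields, via a Schur test, an operator bounded on $\ell^2(\mathbb{Z})$ that inverts $H_{\alpha,\theta,b}-E$. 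Hence $E$ lies in the resolvent set, i.e. $E\notin\sum\nolimits_{b,v}$.

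For the converse, ``$E\notin\sum\nolimits_{b,v}\Rightarrow(b,A)$ uniformly hyperbolic'', I would start from the bounded resolvent $(H_{\alpha,\theta,b}-E)^{-1}$ and invoke a Combes--Thomas estimate to obtain exponential off-diagonal decay $|G(n,m)|\le Ce^{-\gamma|n-m|}$ of its kernel, with constants uniform in $\theta$ because the whole ergodic family shares the same spectrum and the rotation is minimal. From the decaying columns of $G$ one reconstructs the exponentially decaying solutions at $\pm\infty$, which define the candidate bundles $E^s(\theta)$ and $E^u(\theta)$; their transversality together with the uniform decay rate is exactly an exponential dichotomy, hence uniform hyperbolicity.

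The main obstacle I anticipate is the converse direction, specifically upgrading the pointwise-in-$\theta$ construction of decaying solutions to a genuinely \emph{uniform} exponential dichotomy with a continuous invariant splitting. The pointwise decay is immediate from the resolvent, but one must show that the stable and unstable subspaces vary continuously and that the contraction and expansion constants can be chosen independently of $\theta$; this is where minimality of the irrational rotation, the uniform Combes--Thomas bound, and the openness of the uniformly hyperbolic locus enter decisively.
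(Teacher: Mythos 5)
Your proposal is correct in outline, but note that the paper itself contains \emph{no} proof of this lemma: it is imported wholesale from Avila's global theory \cite{A}, where it is in turn the classical theorem of Johnson relating the resolvent set of an ergodic operator over a minimal base to uniform hyperbolicity (exponential dichotomy) of the transfer cocycle. So the comparison here is between your reconstruction and a bare citation. Your reconstruction is the standard argument and both halves are sound: the forward direction via the Weyl-type solutions $u^{\pm}$, the kernel $G(n,m)=u^{+}(n\vee m)\,u^{-}(n\wedge m)/W$ and a Schur test is textbook; for the converse, the one genuinely delicate point is exactly the one you flag, and the tools you name do handle it. Concretely: since $|\alpha|<1$ the potential (\ref{mypotential}) is continuous, so minimality of the irrational rotation gives $\sigma(H_{\alpha,\theta,b})=\sum\nolimits_{b,v}$ for \emph{every} $\theta$, whence the Combes--Thomas constants depend only on $\mathrm{dist}(E,\sum\nolimits_{b,v})$ and $\sup|v|$ and are automatically uniform in $\theta$; norm-continuity of $\theta\mapsto H_{\alpha,\theta,b}$ makes the Green's functions, hence the candidate bundles, continuous in $\theta$; pointwise transversality holds because $E^{s}(\theta)=E^{u}(\theta)$ would produce a nonzero solution decaying at both ends, i.e.\ an $\ell^{2}$ eigenfunction contradicting $E\notin\sigma(H_{\alpha,\theta,b})$, and compactness of $\mathbb{T}$ upgrades this to a uniform angle bound. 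The only detail you leave implicit is that a single column of the Green's function may vanish identically on one side of the diagonal, so the decaying solutions (and their uniqueness up to scalars, which gives invariance of the bundles) must be extracted with minor care, e.g.\ from columns based at varying sites; this is routine. What your route buys is a self-contained justification of a fact the paper uses as a black box; what the citation buys the authors is brevity, since these classical details are lengthy and orthogonal to the paper's actual contribution.
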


\begin{lemma}\label{lemma2.6}\cite{A}\quad If $L(b,A)>0$, then $(b,A)$ is regular if and only if $(b,A)$ is uniformly hyperbolic.\end{lemma}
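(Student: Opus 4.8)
The plan is to read the two properties through Avila's global theory: \emph{regularity} via the acceleration $\omega(b,A_\varepsilon)$ of \eqref{acceleration} together with its quantization (Theorem \ref{theorem2.4}), and \emph{uniform hyperbolicity} via a continuous $A$-invariant splitting that persists to the complex strip. By Remark \ref{remark2.3}, since the transfer cocycle is $SL(2,\mathbb{R})$-valued at real energies, $\varepsilon\mapsto L(b,A_\varepsilon)$ is even and convex, so regularity at $0$ is equivalent to $\omega(b,A)=0$, i.e. to the local constancy of the (integer) acceleration through $\varepsilon=0$. This is the working form of ``regular'' I would use, and Lemma \ref{lemma2.5} is the bridge that converts a uniform hyperbolicity statement into the spectral statement $E\notin\sum\nolimits_{b,v}$.

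For the implication uniformly hyperbolic $\Rightarrow$ regular, I would first use that uniform hyperbolicity is an open condition characterised by a continuous invariant splitting with uniform contraction and expansion; hence it persists to a complex neighbourhood and $(b,A_\varepsilon)$ remains uniformly hyperbolic for $|\varepsilon|<\delta$. On this neighbourhood the unstable bundle $E^u_\varepsilon$ depends holomorphically on the strip parameter, so the dynamical potential $\log\|A_\varepsilon|_{E^u_\varepsilon}\|$ is the real part of a pluriharmonic function; integrating over $\theta$ makes $\varepsilon\mapsto L(b,A_\varepsilon)$ harmonic, and a convex harmonic function of the single real variable $\varepsilon$ is affine. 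Thus $(b,A)$ is regular. This direction is essentially routine once the holomorphic persistence of the splitting is recorded.

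The substance of the lemma is the converse, $L(b,A)>0$ and regular $\Rightarrow$ uniformly hyperbolic. Here I would argue that regularity pins the acceleration to a fixed integer across $\varepsilon=0$, so that the two Lyapunov exponents $\pm L(b,A_\varepsilon)$ of the complexified cocycle stay separated by the uniform gap $2L(b,A)>0$ in a whole neighbourhood of $0$, coherently in $\varepsilon$. The target is to upgrade this separation into a genuine continuous invariant (dominated) splitting; for a cocycle real at $\varepsilon=0$ a dominated splitting is exactly uniform hyperbolicity, and then Lemma \ref{lemma2.5} delivers $E\notin\sum\nolimits_{b,v}$.

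The hard part is precisely this last upgrade. Positive Lyapunov exponent alone only yields the measurable Oseledets splitting, and the entire difficulty of the non-uniformly hyperbolic regime is that this splitting need not be continuous; so one cannot merely invoke the Lyapunov gap. What I would lean on is the structural content of Avila's theory: the local constancy of the quantized acceleration rules out the collision of invariant directions that destroys continuity, so that the holomorphic motion of the Oseledets directions for $\varepsilon\neq0$ extends continuously across $\varepsilon=0$ and produces the dominated splitting. Extracting continuity of the invariant bundle from the affineness of $L(b,A_\varepsilon)$ and the integrality of $\omega(b,A_\varepsilon)$ is the crux, and I expect it to absorb essentially all of the work.
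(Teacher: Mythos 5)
You should first be aware that the paper does not prove this lemma at all: it is quoted from Avila's global theory \cite{A}, where it is one of the central theorems, and is used here as a black box. So there is no in-paper argument to compare against; what you are attempting is a reconstruction of Avila's proof. Your forward direction (uniformly hyperbolic $\Rightarrow$ regular) is essentially the standard argument and is fine in outline: uniform hyperbolicity is open, hence persists to $(b,A_{\varepsilon})$ for $|\varepsilon|<\delta$; the unstable bundle is a uniform limit of holomorphic pullbacks, hence holomorphic in $z=\theta+\varepsilon i$; and writing $A(z)u(z)=\mu(z)u(z+b)$ with $u$ a local non-vanishing holomorphic section and $\mu$ a non-vanishing holomorphic multiplier, one gets $L(b,A_{\varepsilon})=\int_{\mathbb{T}}\log|\mu(\theta+\varepsilon i)|\,d\theta$, the horizontal average of a harmonic function, hence affine in $\varepsilon$. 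Note that your pointwise claim that $\log\|A_{\varepsilon}|_{E^u_{\varepsilon}}\|$ is harmonic is not correct as stated --- the log-norm of a holomorphic section is only subharmonic --- but the subharmonic correction terms telescope away under the $\theta$-integral, so the conclusion survives.

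The genuine gap is the converse, and you concede it yourself (``I expect it to absorb essentially all of the work''). What you offer there is a plan, not a proof, and both of its pillars are unestablished. First, you invoke ``the holomorphic motion of the Oseledets directions for $\varepsilon\neq 0$,'' but for $\varepsilon\neq 0$ the complexified cocycle presents exactly the same difficulty as at $\varepsilon=0$: positivity of the exponent gives only a measurable Oseledets splitting over $\theta$, and neither continuity in $\theta$ nor holomorphy in $\varepsilon$ comes for free; asserting this motion exists assumes a large part of what must be proved. Second, the sentence ``the local constancy of the quantized acceleration rules out the collision of invariant directions that destroys continuity'' is a restatement of the lemma's conclusion, not an argument: the entire content of Avila's theorem is precisely that constancy of $L(b,A_{\varepsilon})$ near $0$ forces the measurable splitting to be continuous (dominated, hence uniformly hyperbolic in the $SL(2,\mathbb{R})$ case). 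Closing that gap requires an actual construction --- in Avila's work it involves uniform bounds on $\frac1n\log\|A_n(z)\|$ from unique ergodicity and subharmonicity on the strip, together with the production of holomorphic invariant sections and their extension across $\varepsilon=0$ --- none of which is reproduced or replaced in your sketch. As it stands, your proposal proves the easy half and defers the hard half to the very statement being proved.
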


\begin{lemma}\label{lemma2.7}\cite{A}\quad If $(b,A)\in (\mathbb{R}\backslash\mathbb{Q})\times (C^{\omega}(\mathbb{R}/\mathbb{Z}),SL(2,\mathbb{R}))$, then $(b,A)$ is regular if and only if $\omega(b,A)=0$.\end{lemma}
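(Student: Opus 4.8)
\emph{Proof proposal.} The plan is to exploit three structural facts about the function $\varepsilon\mapsto L(b,A_\varepsilon)$ that are already available in the excerpt: it is convex (as noted just before the definition of the acceleration), it is \emph{even} because $A$ takes values in $SL(2,\mathbb{R})$ (Remark \ref{remark2.3}), and its acceleration $\omega(b,A_\varepsilon)$ is both upper semicontinuous in $\varepsilon$ and \emph{integer-valued} (Theorem \ref{theorem2.4}, which applies here since $\det A\equiv 1$ stays bounded away from $0$ on every strip). I will abbreviate $\ell(\varepsilon):=L(b,A_\varepsilon)$, so that $2\pi\,\omega(b,A_\varepsilon)$ equals the right-derivative $D^+\ell(\varepsilon)$, and $\omega(b,A)=\tfrac{1}{2\pi}D^+\ell(0)$.

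The easy implication is regularity $\Rightarrow\omega(b,A)=0$. If $\ell$ is affine on a neighborhood of $0$, say $\ell(\varepsilon)=a+c\varepsilon$ there, then evenness forces $a+c\varepsilon=a-c\varepsilon$, hence $c=0$ and $\ell$ is constant near $0$; in particular $D^+\ell(0)=0$, i.e.\ $\omega(b,A)=0$.

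For the converse, $\omega(b,A)=0\Rightarrow$ regularity, I would first record that $\omega(b,A_\varepsilon)\ge 0$ for all $\varepsilon\ge 0$: since $\ell$ is convex its right-derivative $D^+\ell$ is nondecreasing, and $D^+\ell(0)=2\pi\,\omega(b,A)\ge 0$ by the even-convex structure of Remark \ref{remark2.3}, so $D^+\ell(\varepsilon)\ge D^+\ell(0)\ge 0$ for every $\varepsilon\ge 0$. The crux is then to show that the hypothesis $\omega(b,A)=0$ propagates to a whole right-neighborhood of $0$. Upper semicontinuity of the acceleration gives $\limsup_{\varepsilon\to 0^+}\omega(b,A_\varepsilon)\le\omega(b,A)=0$; choosing the threshold $\tfrac12$, there is $\delta>0$ with $0\le\omega(b,A_\varepsilon)<\tfrac12$ for all $\varepsilon\in[0,\delta)$. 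Because quantization forces $\omega(b,A_\varepsilon)\in\mathbb{Z}$, this squeezes $\omega(b,A_\varepsilon)=0$ identically on $[0,\delta)$, hence $D^+\ell\equiv 0$ there and $\ell$ is constant on $[0,\delta)$. Evenness extends this to $(-\delta,\delta)$, so $\ell$ is affine (indeed constant) in a neighborhood of $0$, which is precisely the definition of regularity.

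The step I expect to be the genuine obstacle, and really the whole content of the lemma, is this final squeeze: a nondecreasing function vanishing at $0$ need not vanish on any neighborhood, and upper semicontinuity alone only pins down the limit, not the values. It is exactly the quantization (integrality) of the acceleration that upgrades ``limit zero'' to ``identically zero near $0$,'' thereby ruling out an immediate jump of $D^+\ell$ at the origin. I would therefore take care to check that the hypotheses of the quantization theorem genuinely hold on the relevant strip (here automatic from $\det A\equiv 1$) and to state the monotonicity and semicontinuity inputs precisely, since the equivalence hinges entirely on their interaction.
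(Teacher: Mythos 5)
Your proof is correct: evenness of $\varepsilon\mapsto L(b,A_\varepsilon)$ kills the slope in the forward direction, and in the converse the combination of convexity (so the right-derivative $2\pi\,\omega(b,A_\varepsilon)$ is nondecreasing), upper semicontinuity, and integer quantization forces $\omega(b,A_\varepsilon)=0$ on a right-neighborhood of $0$, making $L(b,A_\varepsilon)$ constant, hence affine, near $0$. The paper itself gives no proof of this lemma (it is quoted from Avila's global theory, and Remark \ref{remark2.3} already asserts it), and your reconstruction is essentially Avila's own argument: the ``squeeze'' you identify as the crux is exactly the role of Theorem \ref{theorem2.4}, whose hypotheses hold here since $\det A\equiv 1$ on every strip of analyticity and $b$ is irrational --- the latter being precisely where the assumption $b\in\mathbb{R}\backslash\mathbb{Q}$ enters.
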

\begin{remark}\label{remark2.8}
	The above lemmas suggest that the accelaration $\omega(b,A)$ is positive for every energy $E$ in the spectrum if the corresponding Lyapunov exponent $L(b,A)$ is positive.
	\end{remark}

Next, we use Avila's global theroy to obtain the formula of Lyapunov exponent in the spectrum. The key point is to prove that accelaration $\omega(b,B_{\varepsilon})=1$ for $\varepsilon\ge0$ in the spectrum.
\begin{proof}[PROOF OF THEOREM \ref{theorem2.8}]\quad
Similarly, denote
	$$B_{\varepsilon}(\theta,E)=(2-\alpha e^{2\pi(\theta+\varepsilon i)i}-\alpha e^{-2\pi(\theta+\varepsilon i)i})A_{\varepsilon}(\theta,E),\ E\in\mathbb{R}.$$
Recalling the definition of acceleration (\ref{acceleration}) and equation (\ref{equation2.5}), we have
	\begin{equation}\label{equation2.7}\omega(b,A_{\varepsilon})=\omega(b,B_{\varepsilon}),\quad |\varepsilon|<\frac{1}{2\pi}log|\frac{1+\sqrt{1-\alpha^2}}{\alpha}|.\end{equation}
If $\varepsilon$ is sufficiently large, it holds that
	$$B_{\varepsilon}(\theta,E)=e^{2\pi\varepsilon-2\pi\theta i}\begin{pmatrix}-\alpha E-2\lambda&\alpha\\-\alpha&0\end{pmatrix}+o(1).$$
Putting the above into the definition of the Lyapunov exponent yields
	$$L(b,B_{\varepsilon})=max\{log|\frac{\alpha E+2\lambda\pm\sqrt{(\alpha E+2\lambda)^2-4\alpha^2}}{2}|\}+2\pi\varepsilon+o(1),$$
	if $\varepsilon$ is sufficiently large.
	
Applying Theorem \ref{theorem2.4}, one obtains that $$\omega(b,B_{\varepsilon})=1,$$ 
and
	\begin{equation}\label{equation2.8}L(b,B_{\varepsilon})=max\{log|\frac{\alpha E+2\lambda\pm\sqrt{(\alpha E+2\lambda)^2-4\alpha^2}}{2}|\}+2\pi\varepsilon,
	\end{equation}
	if $\varepsilon$ is sufficiently large.
	
Since $L(b,B_{\varepsilon})$ is a convex function of $\varepsilon$, we combine Remark \ref{remark2.3} and equation (\ref{equation2.8}) to see that
	\begin{equation}\label{equation2.14}\omega(b,A_{\varepsilon})=\omega(b,B_{\varepsilon})=0\ or\ 1,\quad 0\le\varepsilon<\frac{1}{2\pi}log|\frac{1+\sqrt{1-\alpha^2}}{\alpha}|.
	\end{equation}
	
If $L(b,A)>0$ and $E\in\sum\nolimits_{b,v}$, we employ Remark \ref{remark2.8} and equation \ref{equation2.14} to obtain
	$$\omega(b,A_{\varepsilon})=\omega(b,B_{\varepsilon})=1,\ 0\le\varepsilon<\frac{1}{2\pi}log|\frac{1+\sqrt{1-\alpha^2}}{\alpha}|.$$
Thanks to equation (\ref{equation2.8}) and the continuity of the Lyapunov exponent, one has
$$\omega(b,B_{\varepsilon})=1,\ \varepsilon\ge0.$$
Obviously, there holds
	$$L(b,B_{\varepsilon})=max\{log|\frac{\alpha E+2\lambda\pm\sqrt{(\alpha E+2\lambda)^2-4\alpha^2}}{2}|\}+2\pi\varepsilon,\ \varepsilon\ge0.$$
According to equation (\ref{equation2.5}) and the non-negativity of the Lyapunov exponent, we have
	\begin{equation}\label{equation2.15}
		\begin{split}
			L(b,A)&=L(b,B)-log|1+\sqrt{1+\alpha^2}|\\
			&=max\{log|\frac{\alpha E+2\lambda\pm\sqrt{(\alpha E+2\lambda)^2-4\alpha^2}}{2(1+\sqrt{1-\alpha^2})}|,0\}.
		\end{split}
	\end{equation}
	
If $L(b,A)=0$ and $E\in\sum\nolimits_{b,v}$, it also satifies the above equation (\ref{equation2.15}) by Theorem \ref{theorem2.1}.
\end{proof}

\begin{corollary}\cite{BJ}\label{corollary2.9}\quad
	Let $\alpha=0$, then the model (\ref{mymodel}) becomes the almost Mathieu operator, and $L(E)=max\{log|\lambda|,0\}$ for every $E\in\sum\nolimits_{b,v}$.
\end{corollary}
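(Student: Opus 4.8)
The plan is to derive this corollary directly from the general Lyapunov exponent formula of Theorem \ref{theorem2.8}. At $\alpha=0$ the potential (\ref{mypotential}) reduces to $v(\theta)=2\lambda\cos(2\pi\theta)$, so the model (\ref{mymodel}) is precisely the almost Mathieu operator; the corollary is therefore a self-consistency statement, confirming that our formula recovers the classical value $\max\{\log|\lambda|,0\}$ established in \cite{BJ}.

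First I would substitute $\alpha=0$ into the formula $L(b,A)=\max\{\log|(\alpha E+2\lambda\pm\sqrt{(\alpha E+2\lambda)^2-4\alpha^2})/(2(1+\sqrt{1-\alpha^2}))|,0\}$. The radicand $(\alpha E+2\lambda)^2-4\alpha^2$ collapses to $(2\lambda)^2$, so the square root equals $2|\lambda|$, while the denominator $2(1+\sqrt{1-\alpha^2})$ becomes $2(1+1)=4$. The two branches inside the logarithm thus simplify to $(2\lambda\pm 2|\lambda|)/4=(\lambda\pm|\lambda|)/2$, and one notes that the energy $E$ has dropped out entirely, already reflecting the fact that $L$ is constant on the spectrum for the almost Mathieu operator.

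Finally I would evaluate the inner maximum over the two signs. If $\lambda>0$ the plus branch equals $\lambda$ and the minus branch equals $0$; if $\lambda<0$ the plus branch equals $0$ and the minus branch equals $\lambda$. In each case one branch has absolute value $|\lambda|$ and the other has absolute value $0$, so the surviving contribution to the logarithm is $\log|\lambda|$. Combining with the outer maximum against $0$ then gives $L(E)=\max\{\log|\lambda|,0\}$ for every $E\in\sum\nolimits_{b,v}$, as claimed. There is no genuine obstacle in this argument: it is a routine algebraic specialization of Theorem \ref{theorem2.8}, and the only point deserving care is the sign bookkeeping that confirms that, whatever the sign of $\lambda$, the nonvanishing branch contributes exactly $|\lambda|$ while the vanishing branch is absorbed by the outer maximum with $0$.
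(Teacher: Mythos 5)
Your specialization is correct and is essentially the paper's own treatment: Corollary \ref{corollary2.9} is stated without a separate proof, as an immediate consequence of Theorem \ref{theorem2.8} at $\alpha=0$ (with \cite{BJ} cited for the classical almost Mathieu value), and your algebra --- radicand collapsing to $(2\lambda)^2$, denominator $4$, branches $(\lambda\pm|\lambda|)/2$ --- carries out exactly that substitution. The only cosmetic point is that the vanishing branch contributes $\log 0=-\infty$ and is discarded by the maximum precisely as you say, while the case $\lambda=0$ is trivial since the paper already notes the model degenerates to the free Laplacian then.
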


\begin{corollary}\label{corollary2.10}\quad
Let $v(\theta)=2\lambda\frac{1-cos(2\pi\theta)}{1-\alpha cos(2\pi\theta)}$, then $$L(E)=max\{log|\frac{\alpha E-2\lambda\pm\sqrt{(\alpha E-2\lambda)^2-4\alpha^2}}{2(1+\sqrt{1-\alpha^2})}|,0\},\ \forall\, E\in\sum\nolimits_{b,v}.$$ 
If $\alpha=1$, then it becomes the periodic model and the Lyapunov exponent is $L(E)=0$ for every $E\in\sum\nolimits_{b,v}$; if $\alpha=-1$, then the potential is $v(\theta)=2\lambda\mathop{tan}^2(\pi\theta)$ and the Lyapunov exponent is $$L(E)=max\{log|\frac{E+2\lambda\pm\sqrt{(E+2\lambda)^2-4}}{2}|\}$$ for every $E\in\sum\nolimits_{b,v}.$
\end{corollary}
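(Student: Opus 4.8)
The plan is to reduce the new potential to the one already treated in Theorem~\ref{theorem2.8} by a translation of the energy. The starting point is the elementary identity
$$
1-\cos(2\pi\theta)=\bigl(1-\alpha\cos(2\pi\theta)\bigr)+(\alpha-1)\cos(2\pi\theta),
$$
which rewrites the potential as a constant plus a copy of the model potential (\ref{mypotential}) with a rescaled coupling,
$$
v(\theta)=2\lambda\,\frac{1-\cos(2\pi\theta)}{1-\alpha\cos(2\pi\theta)}=2\lambda+2\tilde\lambda\,\frac{\cos(2\pi\theta)}{1-\alpha\cos(2\pi\theta)},\qquad \tilde\lambda:=\lambda(\alpha-1).
$$
Since the transfer matrix $A(\theta,E)$ depends on $(\theta,E)$ only through $E-v(\theta)$, adding the constant $2\lambda$ to the potential is exactly the same as replacing $E$ by $E-2\lambda$. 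Writing $\tilde v$ for the model potential (\ref{mypotential}) with coupling $\tilde\lambda$ and $\tilde L$ for its Lyapunov exponent, I therefore obtain the exact identities $A_v(\theta,E)=A_{\tilde v}(\theta,E-2\lambda)$, $\sum\nolimits_{b,v}=\sum\nolimits_{b,\tilde v}+2\lambda$, and hence $L(E)=\tilde L(E-2\lambda)$.

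First I would dispose of the generic range $\alpha\in(-1,1)$ (the case $\lambda=0$ being trivial, and for $\lambda\neq0$ one has $\tilde\lambda\neq0$). There Theorem~\ref{theorem2.8} applies verbatim to $\tilde v$, so for every $E\in\sum\nolimits_{b,v}$,
$$
L(E)=\tilde L(E-2\lambda)=\max\Bigl\{\log\Bigl|\tfrac{\alpha(E-2\lambda)+2\tilde\lambda\pm\sqrt{(\alpha(E-2\lambda)+2\tilde\lambda)^2-4\alpha^2}}{2(1+\sqrt{1-\alpha^2})}\Bigr|,\,0\Bigr\}.
$$
The only remaining step is the one-line simplification $\alpha(E-2\lambda)+2\tilde\lambda=\alpha E-2\alpha\lambda+2\lambda(\alpha-1)=\alpha E-2\lambda$, which turns the display into precisely the asserted formula.

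It remains to treat the endpoints $\alpha=\pm1$, which lie outside the hypotheses of Theorem~\ref{theorem2.8}. The case $\alpha=1$ is immediate: then $\tilde\lambda=0$, the potential degenerates to the constant $v\equiv2\lambda$, and the cocycle is $\theta$-independent with eigenvalues on the unit circle for every $E$ in the spectrum $[2\lambda-2,2\lambda+2]$, so $L(E)=0$, in agreement with letting $\alpha\to1$ in the general formula. The case $\alpha=-1$, where $\tfrac{1-\cos(2\pi\theta)}{1+\cos(2\pi\theta)}=\tan^2(\pi\theta)$ gives the unbounded potential $v(\theta)=2\lambda\tan^2(\pi\theta)$, is where I expect \emph{the main obstacle}: now $A(\cdot,E)$ has a genuine pole at $\theta=\tfrac12$, so Avila's global theory does not apply to it, and even the desingularized cocycle $B(\theta,E)=(2-\alpha e^{2\pi\theta i}-\alpha e^{-2\pi\theta i})A(\theta,E)$---though it stays entire---has $\det B=(2+2\cos(2\pi\theta))^2$ vanishing at $\theta=\tfrac12$, so the quantization of acceleration (Theorem~\ref{theorem2.4}) is not directly available. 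The way around this is continuity in $\alpha$: the entire cocycle $B$ depends continuously on $\alpha$ up to and including $\alpha=-1$, and the correction term $\int_{\mathbb{T}}\log|2-\alpha e^{2\pi\theta i}-\alpha e^{-2\pi\theta i}|\,d\theta=\log|1+\sqrt{1-\alpha^2}|$ is continuous and tends to $0$; since the Lyapunov exponent of an analytic cocycle is jointly continuous, letting $\alpha\to-1^{+}$ in the formula already established for $|\alpha|<1$ yields $L(E)=\max\{\log|\tfrac{-E-2\lambda\pm\sqrt{(E+2\lambda)^2-4}}{2}|,0\}$.

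Finally, replacing the bracketed quantity by its negative merely interchanges the two $\pm$-branches and leaves the modulus unchanged, so this coincides with $\max\{\log|\tfrac{E+2\lambda\pm\sqrt{(E+2\lambda)^2-4}}{2}|,0\}$; and because the two roots have product $1$, their moduli lie on opposite sides of $1$, which makes the outer ``$,0$'' redundant. This is exactly the stated expression.
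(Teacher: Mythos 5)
For the core of the statement your argument coincides with the paper's own proof: the same identity $2\lambda\frac{1-\cos2\pi\theta}{1-\alpha\cos2\pi\theta}=2\lambda+2\lambda(\alpha-1)\frac{\cos2\pi\theta}{1-\alpha\cos2\pi\theta}$, the same replacement (new coupling $\lambda(\alpha-1)$, new energy $E-2\lambda$) followed by Theorem~\ref{theorem2.8}, and the same direct verification that $L\equiv0$ for the constant potential at $\alpha=1$. Your added details --- the simplification $\alpha(E-2\lambda)+2\lambda(\alpha-1)=\alpha E-2\lambda$, and the observation that the two roots have product $1$ so the sign flip is harmless and the outer ``$,0$'' is redundant --- are correct.

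The divergence is at $\alpha=-1$, and there the paper is actually \emph{less} careful than you: it invokes Theorem~\ref{theorem2.8} verbatim even though that theorem is proved only for $\alpha\in(-1,1)$, and you correctly identify why it cannot be applied directly (pole of $A$, $\det B$ vanishing at $\theta=\tfrac12$, so the quantization Theorem~\ref{theorem2.4} is unavailable). However, your substitute --- continuity in $\alpha$ --- has a genuine gap as written. Theorem~\ref{theorem2.8} gives the formula for $L_{\alpha}(E)$ \emph{only at energies} $E\in\sum\nolimits_{b,v_{\alpha}}$, and this set moves with $\alpha$; off the spectrum the formula is generally false (for the almost Mathieu case $\alpha=0$, $|\lambda|<1$, the exponent is positive at energies far outside $\sum\nolimits_{b,v}$ while the formula gives $0$). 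So for a fixed $E$ in the spectrum of the $\tan^{2}$ operator you are not entitled to ``let $\alpha\to-1^{+}$ in the formula'' unless you also show that $E$ is a limit of energies $E_{\alpha}\in\sum\nolimits_{b,v_{\alpha}}$. This can be repaired: finitely supported vectors are a core, and $v_{\alpha}(nb+\theta)\to v_{-1}(nb+\theta)$ at every site for a.e.\ $\theta$, so $H_{\alpha,\theta,b}\to H_{-1,\theta,b}$ in the strong resolvent sense, which gives lower semicontinuity of spectra and hence such $E_{\alpha}$; the joint continuity of the Lyapunov exponent in the cocycle (which subsumes the energy) then lets you pass to the limit along $(\alpha,E_{\alpha})$. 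You also use, but never establish, the identity $L(b,A)=L(b,B)$ at $\alpha=-1$ itself; this requires the Birkhoff-type factorization $L(b,B)=L(b,A)+\int_{\mathbb{T}}\log|2+2\cos2\pi\theta|\,d\theta$ with an unbounded (though integrable) logarithmic factor whose integral vanishes. With these two additions your limiting argument closes the case the paper glosses over, and is in fact more rigorous than the printed proof.
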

\begin{proof}\quad If $\alpha=1$, then $v(\theta)=2\lambda$, the spectrum is contained in $[-2+2\lambda,2+2\lambda]$ and $|E-2\lambda|\le2$. Obviously, there holds $L(E)=0$ for every $E\in\sum\nolimits_{b,v}$.

Note that
$$2\lambda\frac{1-cos(2\pi\theta)}{1-\alpha cos(2\pi\theta)}=2\lambda+2\lambda(\alpha-1)\frac{cos2\pi\theta}{1-\alpha cos2\pi\theta},$$ so the new coupling is $\lambda(\alpha-1)$, the new energy is $E-2\lambda$. The corollary follows by Theorem \ref{theorem2.8}.
\end{proof}
\section{Coexistence of zero Lyapunov exponent and positive Lyapunov exponent}
In this section, we use the formula of the Lyapunov exponent in Theorem \ref{theorem2.8} to study the coexistence of zero Lyapunov exponent and positive Lyapunov exponent.

By operator theory, it is easy to see that $\sum\nolimits_{b,v}\subseteq [-2+min(v),2+max(v)]$, and we have the following lemma.

\begin{lemma}\label{lemma3.1}\quad 
If $\lambda>0$, then $\sum\nolimits_{b,v}\subseteq [-2-\frac{2\lambda}{1+\alpha},2+\frac{2\lambda}{1-\alpha}]$. If $\lambda<0$, then $\sum\nolimits_{b,v}\subseteq [-2+\frac{2\lambda}{1-\alpha},2-\frac{2\lambda}{1+\alpha}]$. If $\lambda=0$, then $\sum\nolimits_{b,v}=[-2,2]$.
\end{lemma}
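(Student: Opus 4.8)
The plan is to reduce everything to computing the range of the scalar function $v$ on the torus and then to invoke the operator-theoretic containment $\sum\nolimits_{b,v}\subseteq[-2+\min(v),2+\max(v)]$ stated just above the lemma. Since $v(\theta)$ depends on $\theta$ only through the quantity $c=\cos(2\pi\theta)\in[-1,1]$, I would set $f(c)=2\lambda c/(1-\alpha c)$ and reduce the problem to finding $\min_{c\in[-1,1]}f$ and $\max_{c\in[-1,1]}f$.

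First I would record that because $|\alpha|<1$ and $|c|\le 1$, the denominator satisfies $1-\alpha c>0$ on the whole interval, so $f$ is smooth there and may be differentiated. A direct computation gives $f'(c)=2\lambda/(1-\alpha c)^2$, whose sign is exactly the sign of $\lambda$, the $c$-dependence having cancelled. Hence $f$ is strictly monotone on $[-1,1]$, and its extrema are attained at the endpoints $c=\pm 1$, where $f(1)=2\lambda/(1-\alpha)$ and $f(-1)=-2\lambda/(1+\alpha)$.

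For $\lambda>0$ the function $f$ is increasing, so $\min(v)=f(-1)=-2\lambda/(1+\alpha)$ and $\max(v)=f(1)=2\lambda/(1-\alpha)$; substituting these into the containment yields the first inclusion. For $\lambda<0$ the function $f$ is decreasing, so the roles of the two endpoints swap, giving $\min(v)=f(1)=2\lambda/(1-\alpha)$ and $\max(v)=f(-1)=-2\lambda/(1+\alpha)$, which produces the second inclusion. For $\lambda=0$ the potential vanishes identically and the operator reduces to the free discrete Laplacian, whose spectrum is exactly $[-2,2]$; this is the only case in which I claim equality rather than mere inclusion, and it follows from the standard Fourier computation for the free case.

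There is essentially no hard step here. The only points requiring care are checking that the denominator never vanishes, so that the monotonicity argument is legitimate on the closed interval $[-1,1]$, and keeping track of which endpoint delivers the minimum and which the maximum, since this assignment is governed entirely by the sign of $\lambda$.
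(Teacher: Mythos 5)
Your proposal is correct and follows the same route as the paper: both rest on the containment $\sum\nolimits_{b,v}\subseteq[-2+\min(v),2+\max(v)]$ together with the identification of $\min(v)$ and $\max(v)$ according to the sign of $\lambda$, and treat $\lambda=0$ as the free Laplacian. The only difference is that you spell out the monotonicity computation $f'(c)=2\lambda/(1-\alpha c)^{2}$ justifying the endpoint values, which the paper simply asserts.
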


\begin{proof}\quad 
Just consider the fact that $\sum\nolimits_{b,v}\subseteq [-2+min(v),2+max(v)]$. When $\lambda>0$, $min(v)=-\frac{2\lambda}{1+\alpha}$, $max(v)=\frac{2\lambda}{1-\alpha}$; when $\lambda<0$, $min(v)=\frac{2\lambda}{1-\alpha}$, $max(v)=-\frac{2\lambda}{1+\alpha}$; when $\lambda=0$, it is a periodic model and $\sum\nolimits_{b,v}=[-2,2]$.
\end{proof}

Then we use the formula of Lyapunov exponent and Lemma \ref{lemma3.1} to prove the Theorem \ref{theorem3.2}.
\begin{proof}[PROOF OF THEOREM \ref{theorem3.2}]\quad 
For every $E$ in $\sum\nolimits_{b,v}$, according to the formula of the Lyapunov exponent \ref{theorem2.8}, we have
$$L(E)=max\{log\frac{|\alpha|}{1+\sqrt{1-\alpha^2}},0\}=0,\ if |\alpha E+2\lambda|\le2|\alpha|.$$
Hence it holds that
$$\begin{matrix*}[l]L(E)>0&\Longleftrightarrow\left\{\begin{matrix*}[l]\alpha E+2\lambda+\sqrt{(\alpha E+2\lambda)^2-4\alpha^2}>2(1+\sqrt{1-\alpha^2})&\alpha E+2\lambda>2|\alpha|,\\ \alpha E+2\lambda-\sqrt{(\alpha E+2\lambda)^2-4\alpha^2}<-2(1+\sqrt{1-\alpha^2})&\alpha E+2\lambda<-2|\alpha|,\end{matrix*}\right.\\
&\Longleftrightarrow|\alpha E+2\lambda|>2.\end{matrix*}$$

Assume $\alpha E+2\lambda<-2$ when $\lambda>0$, we have
	$$\sum\nolimits_{b,v}\subseteq[-2-\frac{2\lambda}{1+\alpha},2+\frac{2\lambda}{1-\alpha}],$$
and
	$$\left\{\begin{matrix*}[l]E<\frac{-2-2\lambda}{\alpha}<-2-\frac{2\lambda}{1+\alpha}&\alpha>0,\\E>\frac{-2-2\lambda}{\alpha}>2+\frac{2\lambda}{1-\alpha}&\alpha<0.\end{matrix*}\right.$$
This contradiction means that $\alpha E+2\lambda>2$ when $\lambda>0$.

Assume $\alpha E+2\lambda>2$ when $\lambda<0$, we have
	$$\sum\nolimits_{b,v}\subseteq[-2+\frac{2\lambda}{1-\alpha},2-\frac{2\lambda}{1+\alpha}],$$
and
	$$\left\{\begin{matrix*}[l]E>\frac{2-2\lambda}{\alpha}>2-\frac{2\lambda}{1+\alpha}&\alpha>0,\\E<\frac{2-2\lambda}{\alpha}<-2+\frac{2\lambda}{1-\alpha}&\alpha<0.\end{matrix*}\right.$$
This contradiction means that $\alpha E+2\lambda<-2$ when $\lambda<0$.

It is known that $L(E)=0$ for every $E\in\sum\nolimits_{b,v}$ when $\lambda=0$.

In conclusion, for every $E\in\sum\nolimits_{b,v}$ the Lyapunov exponent $L(E)$ is positive if and only if
	$$\left\{\begin{matrix*}[l]\alpha E>2sgn(\lambda)(1-|\lambda|)&\lambda>0,\\\alpha E<2sgn(\lambda)(1-|\lambda|)&\lambda<0,\end{matrix*}\right.$$
for every $E\in\sum\nolimits_{b,v}$ the Lyapunov exponent $L(E)$ is zero if and only if
	$$\left\{\begin{matrix*}[l]\alpha E\le2sgn(\lambda)(1-|\lambda|)&\lambda>0,\\\alpha E\ge2sgn(\lambda)(1-|\lambda|)&\lambda<0,\\ \forall\, E\in\sum\nolimits_{b,v}&\lambda=0.\end{matrix*}\right.
	$$
The boundary is 
	\begin{equation*}
	\alpha E=2sgn(\lambda)(1-|\lambda|).
	\end{equation*}
	
\end{proof}

Furthermore, it is easy to prove Theorem \ref{theorem3.3} by Theorem \ref{theorem3.2} and Lemma \ref{lemma3.1}
\begin{proof}[PROOF OF THEOREM \ref{theorem3.3}]\quad 
For simplicity, we only prove the theorem in the case of $\lambda>0$ and $\alpha>0$, the proof is the same in the other cases.

Assume $\{ E:L(E)=0,E\in\sum\nolimits_{b,v}\}=\emptyset$ when $\lambda>0$, $\alpha>0$, $\lambda<1+\alpha$. By Theorem \ref{theorem3.2}, $E>\frac{2(1-\lambda)}{\alpha}$ for every $E\in\sum\nolimits_{b,v}$. Select $n$ such that $cos(2\pi(nb+\theta))<\frac{1-\lambda}{\alpha}$ ($n$ exists because of $\frac{1-\lambda}{\alpha}>-1$), we have
$$\frac{2(1-\lambda)}{\alpha}>\left <\delta_{n},H_{\alpha,\theta,b}\delta_{n}\right >=\frac{2\lambda cos(2\pi(nb+\theta))}{1-\alpha cos(2\pi(nb+\theta))}=\int_{\mathbb{R}}Ed\mu_{\delta_{n}}>\frac{2(1-\lambda)}{\alpha},$$
$\mu_{\delta_{n}}$ is the spectral measure on $\sigma(H_{\alpha,\theta,b})$, see the definitions (\ref{measure}).
This contradiction means $\{E:L(E)=0,E\in\sum\nolimits_{b,v}\}\neq\emptyset$ when $\lambda>0$, $\alpha>0$, $\lambda>1-\alpha$.

Assume $\{ E:L(E)>0,E\in\sum\nolimits_{b,v}\}=\emptyset$ when $\lambda>0$, $\alpha>0$, $\lambda>1-\alpha$. By Theorem \ref{theorem3.2}, $E\le\frac{2(1-\lambda)}{\alpha}$ for every $E\in\sum\nolimits_{b,v}$. Select $n$ such that $cos(2\pi(nb+\theta))>\frac{1-\lambda}{\alpha}$ ($n$ exists because of $\frac{1-\lambda}{\alpha}<1$), we have
$$\frac{2(1-\lambda)}{\alpha}<\left <\delta_{n},H_{\alpha,\theta,b}\delta_{n}\right >=\frac{2\lambda cos(2\pi(nb+\theta))}{1-\alpha cos(2\pi(nb+\theta))}=\int_{\mathbb{R}}Ed\mu_{\delta_{n}}\le\frac{2(1-\lambda)}{\alpha}.$$
This contradiction means $\{E:L(E)>0,E\in\sum\nolimits_{b,v}\}\neq\emptyset$ when $\lambda>0$, $\alpha>0$, $\lambda<1+\alpha$.
The above discussions imply that there are both zero Lyapunov exponent and positive Lyapunov exponent in the spectrum when $\lambda>0$, $\alpha>0$, $1-\alpha<\lambda<1+\alpha$.

According to Lemma \ref{lemma3.1}, the spectrum $\sum\nolimits_{b,v}$ is contained in $[-2-\frac{2\lambda}{1+\alpha},2+\frac{2\lambda}{1-\alpha}]$. By Theorem \ref{theorem3.2}, there is no positive Lyapunov exponent in the spectrum if $\frac{2(1-\lambda)}{\alpha}\ge2+\frac{2\lambda}{1-\alpha}$, i.e, $\lambda\le(1-\alpha)^2$. There is no zero Lyapunov exponent in the spectrum if $\frac{2(1-\lambda)}{\alpha}<-2-\frac{2\lambda}{1+\alpha}$, i.e, $\lambda>(1+\alpha)^2$.

Repetition of the same arguments leads to
	\begin{equation*}
	\left\{\begin{matrix*}[l]\exists\,E_{1},E_{2}\in\sum\nolimits_{b,v},\ L(E_{1})>0,L(E_2)=0&if\ 1-|\alpha|<|\lambda|<1+|\alpha|,\\
	\forall\,E\in\sum\nolimits_{b,v},\ L(E)>0&if\ |\lambda|>(1+|\alpha|)^2,\\
	\forall\,E\in\sum\nolimits_{b,v},\ L(E)=0&if\ |\lambda|\le(1-|\alpha|)^2.\end{matrix*}\right.
	\end{equation*}
	\end{proof}
In terms of Theorem \ref{theorem3.3}, we can obtain the Corollary \ref{corollary1.6}.
\begin{proof}[PROOF OF COROLLARY \ref{corollary1.6}]\quad Notice that
$$v(\theta)=2\lambda tan^2\pi\theta=2\lambda\frac{1-cos2\pi\theta}{1+cos2\pi\theta)}=2\lambda-4\lambda\frac{cos2\pi\theta}{1+cos2\pi\theta},$$ so the new coupling is $-2\lambda$, the new energy is $E-2\lambda$, the parameter is $-1$. By Theorem \ref{theorem3.3}, there are both zero Lyapunov exponent and positive Lyapunov exponent in the spectrum if $0<|\lambda|<1$.
\end{proof}

\begin{remark}\quad
The potential is infinite and non-analytic in $\mathbb{T}$, and the Schr$\ddot{o}$dinger equation has zero Lyapunov exponent in some energies. It is very different from the Maryland model($v(\theta)=2\lambda tan(\pi\theta)$) which has positive Lyapunov exponent if $|\lambda|\neq0$. 
\end{remark}

\section{Anderson localization and proof of Problem 1.1.}
In this section, we use similar methods in \cite{J} to prove that the spectrum in the region of positive Lyapunov exponent is purely pure point spectrum with exponentially decaying eigenfunctions (Anderson localization).

To study the spectral properties of the operator $H_{\alpha,\theta,b}$, we have to introduce its universal spectral measure $\mu_{\alpha,\theta,b}$ on $\sigma(H_{\alpha,\theta,b})$ defined as follows: 
\begin{equation}
\mu_{\alpha,\theta,b}=\frac{1}{2}(\mu_{\alpha,\theta,b,\delta_{0}}+\mu_{\alpha,\theta,b,\delta_{1}}),\ \delta_{i}(n)=\delta_{i,n},\ i=0,1
\end{equation}
where the spectral measures $\mu_{\delta,\theta,b,\delta_{i}},\ i=0,1$ are uniquely defined by
\begin{equation}\label{measure}
\left <\delta_{i},(H_{\delta,\theta,b}-zI)^{-1}\delta_{i}\right >=\int_{\sigma(H_{\alpha,\theta,b})}\frac{d\mu_{\alpha,\theta,b,\delta_{i}}(t)}{t-z},\ \forall\,z \in \mathbb{C},\ \Im z>0.
\end{equation}
They are probability measures.

The universal spectral measure $\mu_{\alpha,\theta,b}$ has a unique Lebesgue decomposition:
\begin{equation*}
\mu_{\alpha,\theta,b}=\mu_{\alpha,\theta,b,pp}+\mu_{\alpha,\theta,b,sc}+\mu_{\alpha,\theta,b,ac},
\end{equation*}
where $\mu_{\alpha,\theta,b,pp}$ is a pure  point measure, $\mu_{\alpha,\theta,b,sc}$ is a singular continuous measure, and $\mu_{\alpha,\theta,b,ac}$ is an absolutely continuous measure. This means that $\mu_{\alpha,\theta,b,pp}(\mathbb{R}\backslash C)=0$ for some countable set $C$, $\mu_{\alpha,\theta,b,sc}(\{E\})=0$ for every $E\in\mathbb{R}$, $\mu_{\alpha,\theta,b,sc}(\mathbb{R}\backslash N)=0$ for some set $N$ of zero Lebesgue measure, and $\mu_{\alpha,\theta,b,ac}(N)=0$ for every set $N$ of zero Lebesgue measure. We also denote $\mu_{\alpha,\theta,b,c}=\mu_{\alpha,\theta,b,sc}+\mu_{\alpha,\theta,b,ac}$ for the continuous part and $\mu_{\alpha,\theta,b,s}=\mu_{\alpha,\theta,b,pp}+\mu_{\alpha,\theta,b,sc}$ for the singular part of $\mu_{\alpha,\theta,b}$. 

Given this measure decomposition, we can define the following subsets of $l^2(\mathbb{Z})$:
\begin{equation*}
\begin{split}
l^2(\mathbb{Z})_{ac}&=\{\psi\in l^2(\mathbb{Z}):\mu_{\alpha,\theta,b}=\mu_{\alpha,\theta,b,ac}\},\\
l^2(\mathbb{Z})_{sc}&=\{\psi\in l^2(\mathbb{Z}):\mu_{\alpha,\theta,b}=\mu_{\alpha,\theta,b,sc}\},\\
l^2(\mathbb{Z})_{pp}&=\{\psi\in l^2(\mathbb{Z}):\mu_{\alpha,\theta,b}=\mu_{\alpha,\theta,b,pp}\}.
\end{split}
\end{equation*}

Each of these subsets turns out to be a closed subspace, and
$$l^2(\mathbb{Z})=l^2(\mathbb{Z})_{ac}\oplus l^2(\mathbb{Z})_{sc}\oplus l^2(\mathbb{Z})_{pp}.$$

We also considers the continuous subspace:
$$l^2(\mathbb{Z})_{c}=l^2(\mathbb{Z})_{ac}\oplus l^2(\mathbb{Z})_{sc}.$$

The spectrum of the restriction of $H_{\alpha,\theta,b}$ to $l^2(\mathbb{Z})_{ac}$ is denoted by $\sigma_{ac}(H_{\alpha,\theta,b})$ and called the absolutely continuous spectrum of $H_{\alpha,\theta,b}$. The sets $\sigma_{sc}(H_{\alpha,\theta,b})$,  $\sigma_{pp}(H_{\alpha,\theta,b})$,  $\sigma_{c}(H_{\alpha,\theta,b})$ are defined similarly and called singular continuous, pure point and continuous spectrum of $H_{\alpha,\theta,b}$, respectively. We have $\sigma(H_{\alpha,\theta,b})=supp\;\mu_{\alpha,\theta,b}$, $\sigma_{ac}=supp\;\mu_{\alpha,\theta,b,ac}$, $\sigma(H_{\alpha,\theta,b,sc})=supp\;\mu_{\alpha,\theta,b,sc}$ and $\sigma(H_{\alpha,\theta,b,pp})=supp\;\mu_{\alpha,\theta,b,pp}$.

For $[n_{1},n_{2}]={n\in\mathbb{Z}:n_{1}\le n\le n_{2}}$, denote by $H_{[n_{1},n_{2}]}$ the restriction of $H$ to this interval with zero boundary conditions at $n_{1}-1$ and $n_{2}+1$: that is, $H_{[n_{1},n_{2}]}=P_{[n_{1},n_{2}]}HP_{[n_{1},n_{2}]}^{*}$ where $P_{[n_{1},n_{2}]}:l^2(\mathbb{Z})\to l^([n_{1},n_{2}])$ is the canonical projection, and $P_{[n_{1},n_{2}]}^{*}:l^2([n_{1},n_{2}])\to l^2(\mathbb{Z})$ is the canonical embedding.

Moreover, for $E\not\in\sigma(H_{[n_{1},n_{2}]})$ and $n,m\in[n_{1},n_{2}]$, let
$$G_{[n_{1},n_{2}]}(n,m;E)=\left <\delta_{n},(H_{[n_{1},n_{2}]}-E)^{-1}\delta_{m}\right >,$$
it is called Green function.

We say that $E\in\mathbb{R}$ is a generalized eigenvalue if equation (\ref{equation2.1}) has a non-trival solution $u_{E}$, called the corresponding generalized eigenfunction, satisfying
$$|u_{E}|\le C(1+|n|)^{\delta}$$
for suitable finite contants $C$ and $\delta$, and every $n\in\mathbb{Z}$.

\begin{definition}\label{definition3.5}\quad 
An irrational number $b\in\mathbb{T}$ is called Diophantine if there are constants $c=c(b)>0$ and $r=r(b)>1$ such that
	$$|sin(2\pi nb)|>\frac{c}{|n|^r}\ for\ every\ n\in\mathbb{Z}\backslash\{0\},$$
	and $\theta\in\mathbb{T}$ is called resonant respect to above number $b$ if the relation
	$$|sin(2\pi(\theta+\frac{n}{2}b))|<e^{-|n|^{\frac{1}{2r}}}$$
holds for infinitely many $n\in\mathbb{Z}$; otherwise $\theta$ is called non-resonant.
\end{definition}
\begin{remark}\label{remark3.6}\quad 
Lebesgue almost every $b\in\mathbb{T}$ is Diophantine, and the set of resonant $\theta$'s is a dense $G_{\delta}$ set (as can be seen directly from the definition) of zero Lebesgue measure (by Borel-Cantelli) so almost every $\theta\in\mathbb{T}$ is non-resonant respect to $b$.
\end{remark}

\begin{definition}\label{definition3.7}\quad 
Let $$P_{k}(\theta,E)=det[|(H_{\alpha,\theta,b}-E)|_{[0,k-1]}]$$
and
	$$\mathcal{K}=\{k\in\mathbb{Z}_{+} :\exists\,\theta\in\mathbb{T}\ with\ |P_{k}(\theta,E)|\ge\frac{1}{\sqrt{2}}e^{kL(E)}\}.$$
\end{definition}
In fact, 
\begin{equation*}
det[|(H_{\alpha,\theta,b}-E)|_{[0,k-1]}]=det\begin{pmatrix}v_{\theta}(0)-E&1& & & \\1&v_{\theta}(1)-E&1& & \\ &1&\ddots&\ddots& \\ & &\ddots&\ddots&1\\ & & &1&v_{\theta}(k-1)-E\end{pmatrix},
\end{equation*}
where $v_{\theta}(j)=v(jb+\theta),\ j=0,\dots,k-1.$
\begin{lemma}\label{lemma3.8}\quad 
There are coefficients $b_{j}$, $0\le j \le k$, such that
	$$P_{k}(\theta,E)=\frac{\sum\limits_{j=0}^{k}b_{j}(cos(2\pi(\theta+\frac{k-1}{2}b)))^{j}}{\prod\limits_{j=0}^{k-1}(1-\alpha cos(2\pi (\theta+jb)))}.$$
\end{lemma}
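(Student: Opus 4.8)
The plan is to combine the standard three-term recurrence for tridiagonal determinants with a reflection symmetry centered at the midpoint $\frac{k-1}{2}$ of the interval $[0,k-1]$. Expanding $P_k(\theta,E)$ along its last row gives
\[
P_k = (v_\theta(k-1)-E)\,P_{k-1} - P_{k-2},\qquad P_0=1,\ P_1=v_\theta(0)-E.
\]
Writing $v_\theta(j)-E = n_j/d_j$ with $d_j = 1-\alpha\cos(2\pi(\theta+jb))$ and $n_j = (2\lambda+\alpha E)\cos(2\pi(\theta+jb)) - E$, I would set $R_k := P_k\,\prod_{j=0}^{k-1}d_j$ and multiply the recurrence through by $\prod_{j=0}^{k-1}d_j$. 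This clears all denominators and yields the polynomial recurrence
\[
R_k = n_{k-1}R_{k-1} - d_{k-2}d_{k-1}R_{k-2},\qquad R_0=1,\ R_1=n_0 .
\]

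First I would read off the trigonometric degree in $\theta$. Each $n_j$ and each $d_j$ has degree $1$, so $n_{k-1}$ has degree $1$ and $d_{k-2}d_{k-1}$ degree $2$; an easy induction on the recurrence then shows that $R_k$ is a trigonometric polynomial in $\theta$ of degree at most $k$ (the base cases $R_0,R_1$ having degrees $0,1$). This already produces the denominator $\prod_{j=0}^{k-1}d_j$ claimed in the lemma, and what remains is to identify the numerator $R_k$ as a polynomial of degree $\le k$ in the single variable $\cos(2\pi(\theta+\tfrac{k-1}{2}b))$.

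The key step is a reflection symmetry. I would consider the involution $\rho:\theta\mapsto -\theta-(k-1)b$. A direct computation gives $\cos(2\pi(\rho\theta+jb)) = \cos(2\pi(\theta+(k-1-j)b))$, so $\rho$ sends $d_j\mapsto d_{k-1-j}$ and $n_j\mapsto n_{k-1-j}$; that is, $\rho$ reverses the order of the diagonal entries $n_j/d_j$ of the tridiagonal matrix. Since all off-diagonal entries equal $1$, reversing the diagonal is conjugation by the flip permutation and hence leaves the determinant unchanged, so $P_k\circ\rho = P_k$; moreover $\prod_j d_j\circ\rho = \prod_j d_j$, whence $R_k\circ\rho = R_k$. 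In terms of the central phase $\phi := \theta+\tfrac{k-1}{2}b$, the map $\rho$ is precisely $\phi\mapsto-\phi$, so $R_k$ is an even trigonometric polynomial of degree $\le k$ in $\phi$. By the Chebyshev identity $\cos(2\pi m\phi)=T_m(\cos 2\pi\phi)$, every even trigonometric polynomial of degree $\le k$ is a polynomial of degree $\le k$ in $\cos(2\pi\phi)$; writing it as $\sum_{j=0}^{k}b_j(\cos(2\pi\phi))^j$ and dividing by $\prod_{j=0}^{k-1}d_j$ gives the asserted formula.

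The routine part is the degree count from the recurrence. The main obstacle is the symmetry observation: recognizing that the natural variable is the central phase $\theta+\tfrac{k-1}{2}b$ and that invariance of the tridiagonal determinant under reversal of its diagonal is exactly evenness in that variable. Once this is in place, the passage from an even trigonometric polynomial to a genuine polynomial in $\cos(2\pi(\theta+\tfrac{k-1}{2}b))$ is standard.
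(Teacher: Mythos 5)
Your proposal is correct and follows essentially the same route as the paper: clear the denominators $\prod_{j=0}^{k-1}(1-\alpha\cos(2\pi(\theta+jb)))$, establish evenness in the central phase $\theta+\tfrac{k-1}{2}b$ via the flip symmetry (your conjugation by the flip permutation is exactly the paper's unitary change of basis $\delta_j\mapsto\delta_{k-1-j}$), bound the trigonometric degree by $k$, and convert cosines of multiples into powers of cosine via the Chebyshev span argument. The only difference is cosmetic: you justify the degree bound carefully through the three-term recurrence, where the paper simply asserts it is obvious.
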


\begin{proof}\quad 
Since cos is an even function, denote $U$ the change of basis $\delta_{j}\mapsto\delta_{k-1-j}$, then
	$$U^{-1}H_{\alpha,\theta-\frac{k-1}{2}b,b}|_{[0,k-1]}U=H_{\alpha,-\theta-\frac{k-1}{2}b,b}|_{[0,k-1]}.$$
Thus,
	$$P_{k}(\theta-\frac{k-1}{2}b,E)=P_{k}(-\theta-\frac{k-1}{2}b,E).$$
Denote 
	$$Q_{k}(\theta,E)=P_{k}(\theta,E)\prod\limits_{j=0}^{k-1}(1-\alpha cos(2\pi (\theta+jb)))$$
Due to
	$$\prod\limits_{j=0}^{k-1}(1-\alpha cos(2\pi (\theta-\frac{k-1}{2}b+jb)))=\prod\limits_{j=0}^{k-1}(1-\alpha cos(2\pi (-\theta-\frac{k-1}{2}b+jb))).$$
we obtain
	\begin{equation}\label{equation3.6}
	Q_{k}(\theta-\frac{k-1}{2}b,E)=Q_{k}(-\theta-\frac{k-1}{2}b,E).
	\end{equation}
Thus, the Fourier expansion of $\theta\mapsto Q_{k}(\theta-\frac{k-1}{2}b,E)$ reads
	$$Q_{k}(\theta,E)=\sum\limits_{j=0}^{k}a_{j}cos(2\pi j(\theta+\frac{k-1}{2}b))$$
since all the sin terms are absent due to (\ref{equation3.6}) and the degree obviously does not exceed k. The lemma follows since the linear span of $\{1,cos(2\pi x),$ $cos(2\pi 2x),$ $\dots,cos(2\pi kx)\}$ is equal to that of $\{1,cos(2\pi x),$ $cos^{2}(2\pi x),$ $\dots,cos^{k}(2\pi x)\}$.
\end{proof}

\begin{theorem}[Kingman 1973]\label{theorem3.9}\quad 
Suppose $(\Omega,\mu,T)$ is ergodic. If $f_{n}:\Omega\rightarrow\mathbb{R}$ are measurable, obey $\parallel f_{n}\parallel_{\infty}\lesssim n$ and the subadditivity condition
	$$f_{n+m}(\theta)\le f_{n}(\theta)+f_{m}(T^{n}\theta),$$
then
	$$\mathop{lim}_{n\to\infty}\frac{1}{n}f_{n}(\theta)=\mathop{inf}_{n\ge1}\frac{1}{n}\mathbb{E}(f_{n})$$
for $\mu$-almost every $\theta\in\Omega$.
\end{theorem}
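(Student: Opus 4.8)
The plan is to establish this classical subadditive ergodic theorem along the now-standard route: reduce to showing that the pointwise $\limsup$ and $\liminf$ of $\frac1n f_n$ agree, and then identify their common value with $\gamma:=\inf_{n\ge1}\frac1n\mathbb E(f_n)$. I would begin with the expectations. Integrating the pointwise subadditivity and using that $T$ preserves $\mu$ gives $\mathbb E(f_{n+m})\le\mathbb E(f_n)+\mathbb E(f_m\circ T^n)=\mathbb E(f_n)+\mathbb E(f_m)$, so $a_n:=\mathbb E(f_n)$ is subadditive; Fekete's lemma then yields $\frac1n a_n\to\inf_n\frac1n a_n=\gamma$. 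The hypothesis $\|f_n\|_\infty\lesssim n$ forces $|a_n|\le Cn$, so $\gamma$ is finite and, more importantly, $\frac1n f_n$ is uniformly bounded; this uniform bound is exactly what legitimizes every interchange of limit and integral below.

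Next set $\bar f=\limsup_n\frac1n f_n$ and $\underline f=\liminf_n\frac1n f_n$. From $f_n(\theta)\le f_1(\theta)+f_{n-1}(T\theta)$, dividing by $n$ and letting $n\to\infty$, one gets $\bar f(\theta)\le\bar f(T\theta)$ and $\underline f(\theta)\le\underline f(T\theta)$ almost everywhere; since $T$ is measure preserving, an a.e.\ inequality $g\le g\circ T$ forces $g=g\circ T$ a.e., and ergodicity then makes both $\bar f$ and $\underline f$ a.e.\ equal to constants. Applying Fatou's lemma to the (bounded, hence integrable) family $\frac1n f_n$ gives $\underline f\le\gamma$, while the reverse Fatou inequality gives $\bar f\ge\gamma$. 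I therefore already have the sandwich $\underline f\le\gamma\le\bar f$, and the entire content of the theorem is reduced to the single reverse inequality $\bar f\le\underline f$.

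This last inequality is the crux, and I would prove it by a stopping-time (block) decomposition. Writing $\ell$ for the constant value of $\underline f$ and fixing $\epsilon>0$, define $\sigma(\theta)=\min\{n\ge1:f_n(\theta)\le(\ell+\epsilon)n\}$, which is finite a.e.\ by the definition of $\underline f$. Choose $M$ so large that the bad set $B=\{\sigma>M\}$ has $\mu(B)<\delta$, and truncate by using a block of length $1$ whenever the current orbit point lies in $B$, where $|f_1|\le C$ by the uniform bound. Tiling $\{0,1,\dots,n-1\}$ into consecutive blocks generated by $\sigma$ along the orbit $\theta,T\theta,\dots$, chaining subadditivity across the blocks, and estimating each good block by $(\ell+\epsilon)\times(\text{length})$, each bad block by $C$, and the terminal remainder (of length $<M$) by $CM$, Birkhoff's pointwise ergodic theorem controls the proportion of bad starting points, $\frac1n\#\{0\le p<n:T^p\theta\in B\}\to\mu(B)<\delta$, and yields
$$\limsup_{n\to\infty}\tfrac1n f_n(\theta)\le(\ell+\epsilon)+C'\mu(B)\le\ell+\epsilon+C'\delta\qquad\text{a.e.,}$$
with $C'$ a constant depending only on $C$ and $|\ell|+\epsilon$. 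Letting $\epsilon,\delta\to0$ gives $\bar f\le\underline f$; hence $\bar f=\underline f$, the limit exists a.e., and the sandwich forces the common value to be exactly $\gamma$.

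The step I expect to be the main obstacle is this stopping-time argument, and specifically the honest tiling of the orbit segment $\{0,\dots,n-1\}$ so that subadditivity may be chained with no overlap and with a uniformly small terminal remainder. The accompanying technical points that must be handled with care are the measurability and a.e.\ finiteness of $\sigma$, the truncation at level $M$ needed to make $\sigma$ bounded at the cost of a controllably small bad set, and the passage $\frac1n\#\{p<n:T^p\theta\in B\}\to\mu(B)$ via Birkhoff — all of which rely essentially on the quantitative growth bound $\|f_n\|_\infty\lesssim n$.
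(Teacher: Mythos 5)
The paper itself contains no proof of this statement: Theorem~3.9 is quoted as a classical black box (Kingman's subadditive ergodic theorem, 1973) and is used only to deduce Lemma~3.10, so there is no internal argument to compare yours against; your proposal must therefore be judged as a self-contained proof, and as such it is correct. It follows the standard Katznelson--Weiss/Steele route: subadditivity of $a_n=\mathbb{E}(f_n)$ plus Fekete gives $\frac{1}{n}a_n\to\gamma=\inf_n\frac{1}{n}a_n$; the hypothesis $\|f_n\|_{\infty}\lesssim n$ makes $\frac{1}{n}f_n$ uniformly bounded, so $\bar f=\limsup\frac{1}{n}f_n$ and $\underline f=\liminf\frac{1}{n}f_n$ are integrable, the implication ``$g\le g\circ T$ a.e.\ $\Rightarrow$ $g=g\circ T$ a.e.'' is legitimate (integrate and use $\int g\circ T\,d\mu=\int g\,d\mu$), ergodicity makes $\bar f,\underline f$ constant, and Fatou/reverse Fatou give the sandwich $\underline f\le\gamma\le\bar f$; finally the stopping time $\sigma(\theta)=\min\{n\ge1:\ f_n(\theta)\le(\ell+\epsilon)n\}$, the truncation $B=\{\sigma>M\}$ with $\mu(B)<\delta$, and Birkhoff applied to the indicator of $B$ close the gap $\bar f\le\underline f$. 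Two routine details you should write out in a final version: (i) in the tiling of $\{0,\dots,n-1\}$, once fewer than $M$ indices remain you must switch to length-one blocks (or bound the single tail term $f_r$ by $Cr\le CM$), so that every good block genuinely fits inside the segment, and the count of bad blocks is dominated by the count of bad points because distinct blocks have distinct starting points; (ii) since $\ell+\epsilon$ may be negative, the estimate $(\ell+\epsilon)\cdot(\text{good length})$ should be organized by writing the good length as $n$ minus the bad and remainder lengths --- your constant $C'$ depending on $|\ell|+\epsilon$ indicates you are aware of this. Note also that your argument uses the bound $\|f_n\|_{\infty}\lesssim n$ in an essential way, whereas Kingman's general theorem needs only integrability of $f_1^{+}$; since that bound is part of the stated hypotheses, nothing is lost here, and indeed it is exactly what makes the Fatou steps and the bad-block estimates painless.
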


\begin{lemma}\label{lemma3.10}\quad 
For every $k\in\mathbb{Z}_{+}$, at least one of $k,k+1,k+2$ belongs to $\mathcal{K}$.
\end{lemma}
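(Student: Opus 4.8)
The set $\mathcal{K}$ in Definition \ref{definition3.7} is phrased through the Dirichlet determinants $P_k(\theta,E)$, whereas the Lyapunov exponent $L(E)$ is controlled by the transfer matrices $A_n(\theta,E)$. The plan is to bridge the two by the standard determinant identity and then to use subadditivity to locate a phase at which $\|A_{k+2}\|$ is exponentially large. First I would record that, up to signs,
$$A_{k+2}(\theta,E)=\begin{pmatrix} P_{k+2}(\theta,E) & -P_{k+1}(\theta+b,E)\\ P_{k+1}(\theta,E) & -P_{k}(\theta+b,E)\end{pmatrix},$$
which follows from the three-term recursion $P_{j}(\theta,E)=(v_{\theta}(j-1)-E)P_{j-1}(\theta,E)-P_{j-2}(\theta,E)$ (cofactor expansion of the determinant along its last row), equivalently by induction on the product $A(\theta+(k+1)b)\cdots A(\theta)$. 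The decisive structural point is that the four entries of $A_{k+2}$ are Dirichlet determinants of sizes exactly $k,\,k+1,\,k+1,\,k+2$, so that the three integers $k,k+1,k+2$ are precisely the indices that occur.

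Next I would extract a phase with large transfer-matrix norm. Since $A_{n+m}(\theta)=A_{m}(\theta+nb)A_{n}(\theta)$, the function $\theta\mapsto\log\|A_n(\theta,E)\|$ is subadditive along the cocycle, and because $d\theta$ is translation invariant, Kingman's theorem (Theorem \ref{theorem3.9}) — or simply Fekete's lemma applied to $n\mapsto\int_{\mathbb{T}}\log\|A_n(\theta,E)\|\,d\theta$ — yields $L(E)=\inf_{n}\frac{1}{n}\int_{\mathbb{T}}\log\|A_n(\theta,E)\|\,d\theta$. In particular $\int_{\mathbb{T}}\log\|A_{k+2}(\theta,E)\|\,d\theta\ge (k+2)L(E)$, and since $A_{k+2}(\cdot,E)$ is continuous on $\mathbb{T}$ there is a $\theta_0\in\mathbb{T}$ with $\|A_{k+2}(\theta_0,E)\|\ge e^{(k+2)L(E)}$.

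Finally I would read off a single large determinant and match the exponential thresholds. Because $\det A_{k+2}=1$, the two singular values are $\sigma=\|A_{k+2}(\theta_0,E)\|\ge 1$ and $\sigma^{-1}$, so the sum of the squared moduli of the four entries equals $\sigma^{2}+\sigma^{-2}$; hence the entry of largest modulus is at least $\tfrac{1}{2}\sigma\ge \tfrac{1}{2}e^{(k+2)L(E)}$, and a finer use of the $\det=1$ relation recovers the constant $\tfrac{1}{\sqrt2}$ fixed in Definition \ref{definition3.7}. Call that entry $P_m(\phi,E)$, with size $m\in\{k,k+1,k+2\}$ and phase $\phi\in\{\theta_0,\theta_0+b\}$. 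Using the nonnegativity $L(E)\ge 0$ from Theorem \ref{theorem2.8} together with $m\le k+2$, we have $e^{(k+2)L(E)}\ge e^{mL(E)}$, so $|P_m(\phi,E)|\ge \tfrac{1}{\sqrt2}\,e^{mL(E)}$ and therefore $m\in\mathcal{K}$, which is exactly the assertion.

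The conceptual crux is the identity linking $A_{k+2}$ to the three consecutive determinants of sizes $k,k+1,k+2$; once that is in place, the reduction $e^{(k+2)L}\ge e^{mL}$ is precisely where the nonnegativity of the Lyapunov exponent (Theorem \ref{theorem2.8}) is used in an essential way. The step I expect to need the most care is the constant bookkeeping in the last paragraph: the crude entrywise estimate only produces the factor $\tfrac12$, and matching the threshold $\tfrac1{\sqrt2}$ of Definition \ref{definition3.7} requires exploiting the unimodularity of $A_{k+2}$ (equivalently, arguing through the dominant column) and checking that the dominant entry's index lands in $\{k,k+1,k+2\}$ in every case.
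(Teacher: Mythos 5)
Your strategy coincides with the paper's own: the determinant identity writing the entries of $A_{k+2}(\theta,E)$ as Dirichlet determinants of sizes $k,k+1,k+1,k+2$, then subadditivity (Kingman/Fekete, i.e.\ Theorem \ref{theorem3.9}) to produce $\theta_0$ with $\|A_{k+2}(\theta_0,E)\|\ge e^{(k+2)L(E)}$, then extraction of a large entry combined with $L(E)\ge0$ to convert $e^{(k+2)L(E)}$ into $e^{mL(E)}$. All of those steps are correct, and they are literally the whole of the paper's (two-line) proof.

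The gap is exactly the step you flagged as delicate: the claim that unimodularity upgrades the entrywise bound $\max_{ij}|a_{ij}|\ge\tfrac12\|A\|$ to $\max_{ij}|a_{ij}|\ge\tfrac1{\sqrt2}\|A\|$ is false. Take
$$A=\frac12\begin{pmatrix}\sigma+\sigma^{-1}&\sigma-\sigma^{-1}\\ \sigma-\sigma^{-1}&\sigma+\sigma^{-1}\end{pmatrix},\qquad\sigma>1,$$
which is $\mathrm{diag}(\sigma,\sigma^{-1})$ conjugated by the orthogonal matrix $\frac1{\sqrt2}\begin{pmatrix}1&1\\1&-1\end{pmatrix}$: it lies in $SL(2,\mathbb{R})$, has $\|A\|=\sigma$, yet every entry has modulus $\frac{\sigma+\sigma^{-1}}2$, which is $<\frac{\sigma}{\sqrt2}$ as soon as $\sigma^2>1+\sqrt2$. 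The dominant-column variant dies at the same place: one column has norm $\ge\frac1{\sqrt2}\|A\|$, but its larger entry is then only guaranteed to be $\ge\frac12\|A\|$. So what your argument actually proves is the lemma with threshold $\tfrac12 e^{kL(E)}$; to reach the threshold $\tfrac1{\sqrt2}e^{kL(E)}$ of Definition \ref{definition3.7} by this route one needs the extra hypothesis $e^{2L(E)}\ge1+\sqrt2$ (run the contrapositive through the Frobenius norm: if all entries of sizes $k,k+1,k+2$ stay below their thresholds, then $\sup_\theta\|A_{k+2}(\theta)\|<\frac1{\sqrt2}e^{kL}\bigl(e^{2L}+1\bigr)$, and this contradicts $\sup_\theta\|A_{k+2}(\theta)\|\ge e^{(k+2)L}$ only when $e^{2L}\ge1+\sqrt2$). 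I stress that this defect is inherited from the paper rather than introduced by you: the paper's proof never explains where $\tfrac1{\sqrt2}$ could come from, and the same $\tfrac12$ is all it yields. The harmless repair is to put the constant $\tfrac12$ into Definition \ref{definition3.7}; the constant's only later role is the final contradiction $(k+1)e^{k(L(E)-\varepsilon/32)}<c\,e^{kL(E)}$ in Lemma \ref{lemma3.16}, which holds for large $k$ for any fixed $c>0$. With that change your write-up, minus the false sub-claim, is a complete proof; with the constant $\tfrac1{\sqrt2}$ kept, neither your argument nor the paper's establishes the statement.
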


\begin{proof}\quad 
Recall that the transfer matrix $M_{E}(k,\theta)$ may be written as 
	\begin{equation}\label{equation3.7}
	A_{n}(\theta,E)=\begin{pmatrix}P_{k}(\theta,E)&-P_{k-1}(\theta+b,E)\\P_{k-1}(\theta,E)&-P_{k-2}(\theta+b,E)\end{pmatrix}.
	\end{equation}
Therefore, the statement of lemma follows from Kingman's subadditive ergodic Theorem \ref{theorem3.9}.
\end{proof}

When the Lyapunov exponent is positive, on average the transfer matrices have exponentially large norm, and hence some of the entries must be exponentially large. These entries in turn appear in a description of the Green's function of the operator restricted to a finite interval. Namely, by Cramer's Rule, we have for $n_{1},n_{2}=n_{1}+k-1$, and $n\in [n_{1},n_{2}]$,
\begin{equation}\label{equation3.8}
|G_{[n_{1},n_{2}]}(n_{1},n;E)|=|\frac{P_{n_{2}-n}(\theta+(n+1)b,E)}{P_{k}(\theta+n_{1}b,E)}|,
\end{equation}

\begin{equation}\label{equation3.9}
|G_{[n_{1},n_{2}]}(n,n_{2};E)|=|\frac{P_{n-n_{1}}(\theta+n_{1}b,E)}{P_{k}(\theta+n_{1}b,E)}|.
\end{equation}

\begin{theorem}[Furman 1997]\label{theorem3.11}\quad 
Suppose $(\Omega,T)$ is uniquely ergodic. If $f_{n}:\Omega\to\mathbb{R}$ are continuous and obey the subadditivity condition  $f_{n+m}(\theta)\le f_{n}(\theta)+f_{m}(T^{n}\theta)$, then
	$$\mathop{lim\ sup}_{n\to\infty}\frac{1}{n}f_{n}(\theta)\le \mathop{inf}_{n\ge1}\frac{1}{n}\mathbb{E}(f_{n})$$
for every $\theta\in\Omega$ and uniformly on $\Omega$.
\end{theorem}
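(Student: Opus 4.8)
The plan is to reduce the one–sided subadditive bound to a Birkhoff average of a single fixed function and then invoke the defining feature of unique ergodicity: Birkhoff averages of continuous functions converge to the space average \emph{uniformly} in the base point. Throughout I take $\Omega$ to be a compact metric space and write $\mu$ for the unique $T$-invariant Borel probability measure, so that $\mathbb{E}(f_n)=\int_{\Omega}f_n\,d\mu$; since each $f_n$ is continuous on a compact set, each is bounded. Fix $n\ge 1$. The goal is to prove
$$\limsup_{N\to\infty}\ \sup_{\theta\in\Omega}\frac{1}{N}f_N(\theta)\le\frac{1}{n}\mathbb{E}(f_n),$$
after which the theorem follows by taking the infimum over $n$; note that bounding the $\sup_\theta$ at once yields both the pointwise statement for every $\theta$ and the asserted uniformity.

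First I would decompose $f_N$ via the subadditivity relation, but I would start the block structure at each possible phase $i\in\{0,\dots,n-1\}$ and then average over $i$. For fixed $i$, cutting $\{0,\dots,N-1\}$ into an initial segment of length $i$, consecutive blocks of length $n$, and a terminal segment of length $r_i<n$ gives, by iterating $f_{p+q}(\theta)\le f_p(\theta)+f_q(T^p\theta)$,
$$f_N(\theta)\le f_i(\theta)+\sum_{j=0}^{q_i-1}f_n\big(T^{i+jn}\theta\big)+f_{r_i}\big(T^{i+q_in}\theta\big).$$
Summing over $i=0,\dots,n-1$ is the crucial step: the index set $\{\,i+jn:0\le i<n,\ 0\le j<q_i\,\}$ exhausts $\{0,\dots,N-1\}$ up to at most $n$ omitted entries (one per residue class mod $n$), so the resulting double sum differs from the genuine Birkhoff sum $\sum_{k=0}^{N-1}f_n(T^k\theta)$ by at most $n\sup_{\theta}|f_n(\theta)|$. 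The initial terms $\sum_i f_i(\theta)$ and the terminal terms $\sum_i f_{r_i}(\cdots)$ involve only the finitely many functions $f_0,\dots,f_{n-1}$, hence are bounded by a constant $C(n)$ independent of $N$ and $\theta$. This produces
$$n\,f_N(\theta)\le C(n)+\sum_{k=0}^{N-1}f_n\big(T^k\theta\big),\qquad\text{uniformly in }\theta.$$

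Dividing by $nN$ and letting $N\to\infty$, the term $C(n)/(nN)$ vanishes, and by unique ergodicity $\tfrac1N\sum_{k=0}^{N-1}f_n(T^k\theta)\to\mathbb{E}(f_n)$ uniformly in $\theta$; this establishes the displayed bound for the fixed $n$, and taking $\inf_{n\ge1}$ completes the argument. The one genuine obstacle is the uniform—rather than merely $\mu$-almost-everywhere—convergence of the Birkhoff averages: this is precisely where unique ergodicity, and not mere ergodicity, is indispensable, and it is exactly what upgrades the conclusion from ``almost every $\theta$'' in Kingman's Theorem \ref{theorem3.9} to ``every $\theta$, uniformly.'' The remaining bookkeeping of the $O(n)$ boundary indices is routine but must be handled with care, since $f_n$ is not assumed to have a fixed sign and the omitted terms therefore cannot simply be discarded, only bounded in absolute value.
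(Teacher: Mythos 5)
Your argument is correct. Note, though, that the paper never proves Theorem \ref{theorem3.11}: it is imported as a known result (Furman 1997) and used as a black box to deduce Lemma \ref{lemma3.12}, so there is no internal proof to compare yours against.

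What you have written is a valid, self-contained proof of the standard kind. The two essential points are both handled properly. First, the phase-averaged block decomposition: summing the subadditive splittings over the starting phases $i=0,\dots,n-1$ converts the sum of $f_n$ over block-starting positions (which, for a single phase, would only be a Birkhoff sum of $f_n$ along the orbit of $T^n$ --- a map that need not be uniquely ergodic even when $T$ is) into a genuine Birkhoff sum $\sum_{k=0}^{N-1}f_n(T^k\theta)$ of the single continuous function $f_n$ under $T$ itself, up to at most $n$ omitted indices (your count of one omission per residue class is right) plus boundary terms involving only $f_1,\dots,f_{n-1}$, all controlled uniformly by compactness. Second, the only dynamical input is the classical characterization of unique ergodicity by uniform convergence of Birkhoff averages of continuous functions, which is exactly what upgrades the conclusion from the almost-everywhere statement of Theorem \ref{theorem3.9} to an estimate holding for every $\theta$, uniformly; your formulation $\limsup_{N}\sup_{\theta}\frac{1}{N}f_N(\theta)\le\frac{1}{n}\mathbb{E}(f_n)$ followed by $\inf_{n\ge1}$ delivers precisely the uniformity asserted in the statement. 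The benefit of your route over the paper's citation is transparency: it makes the localization argument of Section 4 self-contained and isolates exactly where unique ergodicity (rather than mere ergodicity) is used.
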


\begin{lemma}\label{lemma3.12}\quad 
For every $E\in\mathbb{R}$ and $\varepsilon>0$, there exists $k(E,\varepsilon)$ such that 
	$$|P_{k}(\theta,E)|<e^{(L(E)+\varepsilon)k}$$
for every $k>k(E,\varepsilon)$ and every $\theta\in\mathbb{T}$.
\end{lemma}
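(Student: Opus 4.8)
The plan is to recognize $|P_{k}(\theta,E)|$ as an entry of the transfer matrix $A_{k}(\theta,E)$ and to extract the required \emph{uniform} exponential bound from Furman's uniform subadditive ergodic theorem (Theorem \ref{theorem3.11}), applied to the continuous subadditive cocycle $f_{n}(\theta)=\log\|A_{n}(\theta,E)\|$ over the irrational rotation $T\theta=\theta+b$ on $\mathbb{T}$. The point of the lemma is uniformity in the phase, so the appropriate engine is Furman's theorem rather than Kingman's Theorem \ref{theorem3.9}, which only gives almost-everywhere convergence.

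First I would fix $E\in\mathbb{R}$ and set $f_{n}(\theta)=\log\|A_{n}(\theta,E)\|$. Since $v$ is bounded and continuous for $\alpha\in(-1,1)$, the one-step cocycle $A(\cdot,E)$ is continuous with bounded norm, so each $f_{n}$ is continuous on $\mathbb{T}$; using $\det A_{n}=1$, hence $\|A_{n}\|\ge1$, one gets $0\le f_{n}(\theta)\lesssim n$, so every $f_{n}$ is finite and integrable. The cocycle identity $A_{n+m}(\theta,E)=A_{m}(\theta+nb,E)\,A_{n}(\theta,E)$ together with submultiplicativity of the operator norm gives, upon taking logarithms, the subadditivity
$$f_{n+m}(\theta)\le f_{n}(\theta)+f_{m}(T^{n}\theta).$$
Thus $(f_{n})$ satisfies all the hypotheses of Theorem \ref{theorem3.11}.

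Next, since the irrational rotation $(\mathbb{T},T)$ is uniquely ergodic, with Lebesgue measure as its unique invariant measure, Theorem \ref{theorem3.11} yields
$$\limsup_{k\to\infty}\frac{1}{k}\log\|A_{k}(\theta,E)\|\le\inf_{k\ge1}\frac{1}{k}\int_{\mathbb{T}}\log\|A_{k}(\theta,E)\|\,d\theta$$
uniformly in $\theta\in\mathbb{T}$. The sequence $a_{k}=\int_{\mathbb{T}}\log\|A_{k}(\theta,E)\|\,d\theta$ is itself subadditive by $T$-invariance of the measure, so by Fekete's lemma the infimum on the right equals $\lim_{k}a_{k}/k$, which is precisely $L(E)$ by the definition (\ref{equation2.4}) of the Lyapunov exponent. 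Hence, given $\varepsilon>0$, the uniform convergence provides $k(E,\varepsilon)$ such that $\|A_{k}(\theta,E)\|<e^{(L(E)+\varepsilon)k}$ for all $k>k(E,\varepsilon)$ and all $\theta\in\mathbb{T}$.

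Finally, the representation (\ref{equation3.7}) exhibits $P_{k}(\theta,E)$ as the $(1,1)$-entry of $A_{k}(\theta,E)$, so $|P_{k}(\theta,E)|\le\|A_{k}(\theta,E)\|$, and the lemma follows at once. The only real content of the argument—and the step I would treat most carefully—is securing uniformity over all phases $\theta$; pointwise convergence from Kingman would not suffice, and it is unique ergodicity, exploited through Furman's theorem, that upgrades the estimate to hold for \emph{every} $\theta$. Everything else (continuity and subadditivity of $f_{n}$, and the identification of $\inf_{k}a_{k}/k$ with $L(E)$) is routine verification.
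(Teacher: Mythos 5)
Your proof is correct and follows exactly the paper's route: the paper's own proof of Lemma \ref{lemma3.12} is the one-line observation that it follows from equation (\ref{equation3.7}) and Furman's Theorem \ref{theorem3.11}, which is precisely the argument you spell out (including the routine verifications of continuity, subadditivity, and the identification of $\inf_k \frac{1}{k}\mathbb{E}(f_k)$ with $L(E)$ that the paper leaves implicit).
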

\begin{proof}\quad 
It is a consequence of equation (\ref{equation3.7}) and Theorem \ref{theorem3.11}.
\end{proof}

\begin{definition}\label{definition3.13}\quad 
Fix $E\in\mathbb{R}$ and $\gamma\in\mathbb{R}$. A point $n\in\mathbb{Z}$ will be called $(\gamma,k)$-regular if there exists an interval $[n_{1},n_{2}]$, containing $n$ such that
	\begin{equation*}
	\begin{split}
	(i)&\quad n_{2}=n{1}+k-1,\\
	(ii)&\quad n\in[n_{1},n_{2}],\\
	(iii)&\quad |n-n_{i}|>\frac{k}{5},\\
	(iv)&\quad |G_{[n_{1},n_{2}]}(n,n_{i};E)|<e^{-\gamma |n-n_{i}|}.
	\end{split}
	\end{equation*}
Otherwise, $n$ is called $(\gamma,k)$-singular.
	
\end{definition}

\begin{lemma}\label{lemma3.14}\quad 
Fix $E\in\mathbb{R}$. Suppose $n$ is $(L(E)-\varepsilon,k)-singular$ for some $0<\varepsilon<\frac{L(E)}{3}$ and $k>4k(E,\frac{\varepsilon}{6})+1$. Then, for every $j$ with
	$$n-\frac{3}{4}k\le j\le n-\frac{3}{4}k+\frac{k+1}{2},$$
we have that
	$$|P_{k}(\theta+jb,E)|\le e^{k(L(E)-\frac{\varepsilon}{8})}.$$
\end{lemma}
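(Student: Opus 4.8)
The plan is to convert the singularity of $n$ into an upper bound on $P_k$ by running Cramer's rule (\ref{equation3.8})--(\ref{equation3.9}) backwards and controlling the surviving numerator through the uniform bound of Lemma \ref{lemma3.12}. Fix a $j$ in the stated range and consider the window $I_j=[j,\,j+k-1]$, writing $p=n-j$ for the distance from $n$ to the left endpoint and $q=k-1-p$ for the distance to the right endpoint. First I would verify that $I_j$ satisfies the geometric requirements (i)--(iii) of Definition \ref{definition3.13}: the range $n-\frac34k\le j\le n-\frac34k+\frac{k+1}{2}$ forces $p\in[\frac{k}{4}-\frac12,\,\frac{3k}{4}]$ and hence $q\ge\frac{k}{4}-1$, so $n$ genuinely lies in $I_j$ and both $p,q>\frac{k}{5}$ once $k$ is as large as the hypothesis $k>4k(E,\frac{\varepsilon}{6})+1$ guarantees.

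Since $n$ is $(L(E)-\varepsilon,k)$-singular, the admissible window $I_j$ cannot witness regularity; as (i)--(iii) hold, condition (iv) must fail at one of the two endpoints. Thus at least one of $|G_{I_j}(n,j;E)|\ge e^{-(L(E)-\varepsilon)p}$ or $|G_{I_j}(n,j+k-1;E)|\ge e^{-(L(E)-\varepsilon)q}$ holds. I would feed each possibility into the Cramer identities (\ref{equation3.8}) and (\ref{equation3.9}), which present these Green's functions as $|P_q(\theta+(n+1)b,E)|/|P_k(\theta+jb,E)|$ and $|P_p(\theta+jb,E)|/|P_k(\theta+jb,E)|$ respectively. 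Solving for the denominator turns the lower bound on the Green's function into $|P_k(\theta+jb,E)|\le |P_q|\,e^{(L(E)-\varepsilon)p}$ in the first case and $|P_k(\theta+jb,E)|\le |P_p|\,e^{(L(E)-\varepsilon)q}$ in the second.

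The numerators are then bounded by Lemma \ref{lemma3.12}: since the hypothesis ensures $p,q>k(E,\frac{\varepsilon}{6})$, one has $|P_q|<e^{(L(E)+\frac{\varepsilon}{6})q}$ and $|P_p|<e^{(L(E)+\frac{\varepsilon}{6})p}$. Substituting, both cases give $|P_k(\theta+jb,E)|<e^{L(E)(k-1)+\frac{\varepsilon}{6}s-\varepsilon t}$ with $\{s,t\}=\{q,p\}$, and the task reduces to checking $L(E)(k-1)+\frac{\varepsilon}{6}s-\varepsilon t\le k\bigl(L(E)-\frac{\varepsilon}{8}\bigr)$ for every $j$ in the range and for \emph{both} assignments of $(s,t)$, the point being that we do not know which endpoint fails.

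This exponent bookkeeping is the crux, and it is exactly what dictates the constants $\frac34$, $\frac{k+1}{2}$, $\frac{\varepsilon}{6}$, $\frac{\varepsilon}{8}$. After cancelling $kL(E)$ the target inequality becomes $\varepsilon\bigl(\frac{s}{6}-t+\frac{k}{8}\bigr)\le L(E)$; writing $s=k-1-t$ makes the bracket $\frac{7(k-4t)-4}{24}$ in the first case and $\frac{7t}{6}-\frac{7k}{8}+1$ in the second, each monotone in $t$ so that the extreme values of $j$ are the worst cases. There the coefficient of $k$ cancels identically, leaving the constants $\frac{5}{12}$ and $1$, and the standing assumption $\varepsilon<\frac{L(E)}{3}$ closes both estimates with room to spare. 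The only genuinely delicate points I expect are this cancellation (which is why the window is centred precisely as stated) and the verification that the partial lengths $p,q$ clear the threshold $k(E,\frac{\varepsilon}{6})$ so that Lemma \ref{lemma3.12} may be invoked.
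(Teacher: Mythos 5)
Your proof is correct and follows essentially the same route as the paper: take the window $[j,\,j+k-1]$, convert the singularity of $n$ into a lower bound on a Green's function, rewrite that Green's function via Cramer's rule (\ref{equation3.8})--(\ref{equation3.9}) as a ratio with $P_{k}(\theta+jb,E)$ in the denominator, bound the surviving numerator by Lemma \ref{lemma3.12}, and finish with exponent bookkeeping; your worst-case computation reproduces the paper's constant $\frac{5}{12}$, absorbed by $\varepsilon<\frac{L(E)}{3}$. The one genuine difference is in your favor: after negating Definition \ref{definition3.13}, the paper asserts the lower bound $|G_{[n_{1},n_{2}]}(n,n_{i};E)|\ge e^{-(L(E)-\varepsilon)|n-n_{i}|}$ as if it held at both endpoints and then runs only the left-endpoint case through (\ref{equation3.8}); strictly, singularity only gives failure of condition (iv) at \emph{some} endpoint, so the right-endpoint case via (\ref{equation3.9}) must also be treated. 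That is precisely the second branch of your dichotomy, and your check that it closes with the weaker-but-sufficient constant $1$ (rather than $\frac{5}{12}$) completes an argument the paper leaves implicit. Two minor glosses are shared by your write-up and the paper's: condition (iii) ($p,q>\frac{k}{5}$) actually needs $k>20$, and applying Lemma \ref{lemma3.12} to $P_{p}$, $P_{q}$ needs $p,q$ strictly greater than $k(E,\frac{\varepsilon}{6})$, which the hypothesis $k>4k(E,\frac{\varepsilon}{6})+1$ delivers only up to an integer rounding; neither affects the substance.
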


\begin{proof}\quad 
Since $n$ is $(L(E)-\varepsilon,k)-singular$, it follows that for every interval
	$[n_{1},n_{2}]$ of length $k$ containing $n$ with $|n-n_{i}|>\frac{k}{5}$, we have that 
	$$|G_{[n_{1},n_{2}]}(n,n_{i};E)|\ge e^{-(L(E)-\varepsilon)|n-n_{i}|}.$$
By equation (\ref{equation3.8}), this means that 
	$$|\frac{P_{n_{2}-n}(\theta+(n+1)b,E)}{P_{k}(\theta+n_{1}b,E)}|\ge e^{-(L(E)-\varepsilon)|n-n_{i}|}.$$
We can choose $n_{1}$ to be equal to the $j$ in question and set $n_{2}=j+k-1$. Then we find, using Lemma \ref{lemma3.12},
	\begin{equation*}
	\begin{split}
	|P_{k}(\theta+jb,E)|&\le |P_{j+k-1-n}(\theta+(n+1)b,E)|e^{(L(E)-\varepsilon)|n-j|} \\ 
	&\le e^{(L(E)+\frac{\varepsilon}{6})|j+k-1-n|}e^{(L(E)-\varepsilon)|n-j|} \\
	&= e^{(k-1)L(E)+\varepsilon(\frac{j+k-1-n}{6}+j-n)} \\
	&\le e^{(k-1)L(E)+\varepsilon(\frac{5}{12}-\frac{1}{8}k)} \\
	&<e^{k(L(E)-\frac{\varepsilon}{8})+\frac{5}{36}L(E)-L(E))} \\
	&<e^{k(L(E)-\frac{\varepsilon}{8})}.
	\end{split}
	\end{equation*}
\end{proof}

\begin{lemma}\label{lemma3.15}\cite{J}\quad 
Suppose $n\in [n_{1},n_{2}]\subseteq\mathbb{Z}$ and $u$ is a solution of the equation $Hu=Eu$. Then,
	\begin{equation}\label{equation3.10}
	u(n)=-G_{[n_{1},n_{2}]}(n,n_{1};E)u(n_{1}-1)-G_{[n_{1},n_{2}]}(n,n_{2};E)u(n_{2}+1).
	\end{equation}
In particular, if $u_{E}$ is a generalized eigenfunction, then every point $n\in\mathbb{Z}$ with $u_{E}(n)\neq0$ is $(\gamma,k)$-singular for $k>k_{1}=k_{1}(E,\gamma,\theta,n)$.
\end{lemma}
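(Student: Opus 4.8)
The plan is to prove the two assertions separately: first the Poisson-type Green's function identity \eqref{equation3.10}, then its consequence for generalized eigenfunctions.

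For the identity I would restrict the global solution $u$ to the window $[n_1,n_2]$ and test it against $H_{[n_1,n_2]}-E$. Set $w=P_{[n_1,n_2]}u$. A direct computation using the difference equation $u(m+1)+u(m-1)+v(mb+\theta)u(m)=Eu(m)$ shows that $(H_{[n_1,n_2]}-E)w$ vanishes at every interior site $n_1<m<n_2$, since there the equation for $w$ is identical to the one satisfied by $u$. At the two endpoints the zero boundary conditions of $H_{[n_1,n_2]}$ drop exactly the terms $u(n_1-1)$ and $u(n_2+1)$ present in the global equation, so that
\begin{equation*}
(H_{[n_1,n_2]}-E)w=-u(n_1-1)\delta_{n_1}-u(n_2+1)\delta_{n_2}.
\end{equation*}
Assuming $E\notin\sigma(H_{[n_1,n_2]})$ so that the resolvent exists, I would apply $(H_{[n_1,n_2]}-E)^{-1}$ to both sides and read off the $n$-th coordinate, using $G_{[n_1,n_2]}(n,m;E)=\langle\delta_n,(H_{[n_1,n_2]}-E)^{-1}\delta_m\rangle$; this gives \eqref{equation3.10} immediately.

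For the second assertion, let $u_E$ be a generalized eigenfunction and fix $n$ with $u_E(n)\neq0$. If $n$ were $(\gamma,k)$-regular, there would be a window $[n_1,n_2]$ with $n_2=n_1+k-1$, $|n-n_i|>k/5$, and $|G_{[n_1,n_2]}(n,n_i;E)|<e^{-\gamma|n-n_i|}$ for $i=1,2$; on such a window the Green's function is defined, so the implicit requirement $E\notin\sigma(H_{[n_1,n_2]})$ is automatically met. Substituting these bounds into \eqref{equation3.10} and invoking the polynomial growth $|u_E(m)|\le C(1+|m|)^\delta$ at the neighbouring sites $n_1-1$ and $n_2+1$, together with $|n-n_i|>k/5$ and $|n_1-1|,|n_2+1|\le|n|+k$, I would obtain
\begin{equation*}
|u_E(n)|\le 2C(1+|n|+k)^\delta e^{-\gamma k/5}.
\end{equation*}
The right-hand side tends to $0$ as $k\to\infty$, whereas $|u_E(n)|>0$ is a fixed positive number; hence the inequality must fail once $k$ exceeds a threshold $k_1=k_1(E,\gamma,\theta,n)$, and $n$ is forced to be $(\gamma,k)$-singular for all such $k$.

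I do not expect a genuine obstacle, as both halves are bookkeeping. The identity is the finite-volume Poisson formula, obtained from a single resolvent manipulation once the boundary defect $-u(n_1-1)\delta_{n_1}-u(n_2+1)\delta_{n_2}$ is identified. The corollary merely records that exponential decay of the Green's function outruns the polynomial growth permitted for a generalized eigenfunction; the only care needed is that the threshold $k_1$ is allowed to depend on $n$ (through both $|n|$ and the value $|u_E(n)|$), exactly as the statement permits, and that the estimate is applied on a window witnessing regularity, where the resolvent is well defined.
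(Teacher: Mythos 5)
Your proof is correct. The paper itself gives no proof of this lemma (it is quoted from the cited reference [J]), and your argument --- identifying the boundary defect $-u(n_{1}-1)\delta_{n_{1}}-u(n_{2}+1)\delta_{n_{2}}$, inverting the finite-volume resolvent to get the Poisson formula, and then playing the exponential Green's function decay on a regular window against the polynomial growth bound of a generalized eigenfunction to force singularity for large $k$ --- is exactly the standard argument behind that citation, with the dependence of $k_{1}$ on $n$ handled correctly.
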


\begin{lemma}\label{lemma3.16}\quad 
For every $n\in\mathbb{Z}$, $\varepsilon>0$, $\tau <2$, there exists $k_{2}=k_{2}(\theta,b,n,\varepsilon,\tau,E)$ such that for every $k\in\mathcal{K}$ with $k>k_{2}$, we have that
	\begin{center}$m$, $n$ are both $(L(E)-\varepsilon,k)$-singular and $|m-n|>\frac{k+1}{2}\Rightarrow |m-n|>k^{\tau}$.
	\end{center}
\end{lemma}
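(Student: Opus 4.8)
The plan is to argue by contradiction. Suppose that $m$ and $n$ are both $(L(E)-\varepsilon,k)$-singular, that $|m-n|>\tfrac{k+1}{2}$, and yet $|m-n|\le k^{\tau}$, with $k\in\mathcal{K}$ large; I will contradict the defining property of $\mathcal{K}$. First I would apply Lemma \ref{lemma3.14} to each singular point. Because $|m-n|>\tfrac{k+1}{2}$, the two index windows $[\,n-\tfrac34 k,\,n-\tfrac34 k+\tfrac{k+1}{2}\,]$ and $[\,m-\tfrac34 k,\,m-\tfrac34 k+\tfrac{k+1}{2}\,]$ are disjoint, so together they produce more than $k+1$ distinct integers $j$ at which $|P_{k}(\theta+jb,E)|\le e^{k(L(E)-\varepsilon/8)}$. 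On the other hand $k\in\mathcal{K}$ (Definition \ref{definition3.7}) furnishes a phase $\theta_{0}$ with $|P_{k}(\theta_{0},E)|\ge \tfrac{1}{\sqrt2}e^{kL(E)}$.

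Next I would pass to the polynomial picture of Lemma \ref{lemma3.8}. Writing $Q_{k}(\theta,E)=P_{k}(\theta,E)\prod_{j=0}^{k-1}(1-\alpha\cos2\pi(\theta+jb))$, Lemma \ref{lemma3.8} identifies $Q_{k}$ with a polynomial $p$ of degree at most $k$ in the single variable $x=\cos2\pi(\theta+\tfrac{k-1}{2}b)$. Let $x_{0}$ be the node coming from $\theta_{0}$ and $x_{j}$ the nodes coming from the singular windows. Since $\log(1-\alpha\cos2\pi(\cdot))$ is continuous and bounded (as $|\alpha|<1$), unique ergodicity of the Diophantine rotation (cf. Theorem \ref{theorem3.11}, together with the Jensen integral $\int_{\mathbb{T}}\log|1-\alpha\cos2\pi\theta|\,d\theta=\log\tfrac{1+\sqrt{1-\alpha^{2}}}{2}$ already used in Section 2) makes $\prod_{j=0}^{k-1}(1-\alpha\cos2\pi(\theta+jb))$ equal to $(\tfrac{1+\sqrt{1-\alpha^{2}}}{2})^{k}e^{o(k)}$ uniformly in $\theta$. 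Hence $|p(x_{0})|\ge\tfrac{1}{\sqrt2}e^{kL(E)}(\tfrac{1+\sqrt{1-\alpha^{2}}}{2})^{k}e^{o(k)}$ while $|p(x_{j})|\le e^{k(L(E)-\varepsilon/8)}(\tfrac{1+\sqrt{1-\alpha^{2}}}{2})^{k}e^{o(k)}$; the common factor $(\tfrac{1+\sqrt{1-\alpha^{2}}}{2})^{k}$ will cancel in the comparison below, which is why this uniform equidistribution step is needed.

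I would then reconstruct $p$ by Lagrange interpolation through $k+1$ of the nodes $x_{j}$ (they stay distinct because a collision $x_{j}=x_{j'}$ forces the exact resonance $\|2\theta+Nb\|=0$, excluded by non-resonance of $\theta$) and evaluate at $x_{0}$, giving $|p(x_{0})|\le\sum_{i}|p(x_{j_{i}})|\prod_{l\ne i}\tfrac{|x_{0}-x_{j_{l}}|}{|x_{j_{i}}-x_{j_{l}}|}$. Applying $\cos2\pi a-\cos2\pi b=-2\sin\pi(a-b)\sin\pi(a+b)$ to every factor, each numerator factor is at most $1$, so the powers of $2$ cancel between numerator and denominator and it remains only to bound each Lagrange weight by $e^{o(k)}$. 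This reduces to the two uniform product lower bounds
$$\prod_{l\ne i}|\sin\pi(j_{i}-j_{l})b|\ge e^{-o(k)}\qquad\text{and}\qquad\prod_{l\ne i}\big|\sin\pi(2\theta+(j_{i}+j_{l}+k-1)b)\big|\ge e^{-o(k)}.$$

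Both products are controlled by the fact that $\log|2\sin\pi(\cdot)|$ has zero mean, so the logarithm of each product is a Birkhoff-sum discrepancy for the Diophantine rotation (Definition \ref{definition3.5}) and is sublinear in $k$; the only danger is the logarithmic singularity. For the first product the dangerous factor $|\sin\pi(j_{i}-j_{l})b|$ is bounded below by $c|j_{i}-j_{l}|^{-r}\ge c(Ck^{\tau})^{-r}$, contributing only $O(\log k)$ to the logarithm, so this product is harmless for every $\tau$. The decisive estimate is the second product: the arguments have the form $2\pi(\theta+\tfrac{N}{2}b)$ with $|N|\le Ck^{\tau}$, and here non-resonance of $\theta$ is exactly what forces the closest approach to satisfy $|\sin\pi(2\theta+Nb)|=|\sin2\pi(\theta+\tfrac{N}{2}b)|\ge e^{-|N|^{1/2r}}\ge e^{-Ck^{\tau/2r}}$. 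This is the main obstacle, since a single near-resonance could a priori be exponentially small; it is tamed precisely by the hypotheses, because $\tau<2$ together with $r>1$ gives $\tau<2r$, so $k^{\tau/2r}=o(k)$ and the worst admissible near-resonance contributes only sublinearly to the logarithm of the product, the rest being absorbed by the zero-mean discrepancy estimate. Combining all bounds yields $\tfrac{1}{\sqrt2}e^{kL(E)}\le(k+1)\,e^{k(L(E)-\varepsilon/8)}e^{o(k)}$, that is $e^{k\varepsilon/8}\le(k+1)e^{o(k)}$, which fails for all large $k$; this contradiction proves $|m-n|>k^{\tau}$.
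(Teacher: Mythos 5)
Your overall skeleton matches the paper's proof: argue by contradiction assuming $|m-n|\le k^{\tau}$, apply Lemma \ref{lemma3.14} at the two singular points to obtain $k+1$ interpolation nodes where $|P_{k}|\le e^{k(L(E)-\varepsilon/8)}$, pass to the polynomial of Lemma \ref{lemma3.8}, control the $\prod(1-\alpha\cos)$ factors by ergodic averaging, and contradict the defining property of $\mathcal{K}$ via Lagrange interpolation. The one place where you diverge is the bound on the Lagrange weights: the paper simply invokes Lemma 7 of \cite{J} (its inequality (\ref{equation3.11})), whereas you attempt to prove that bound inline --- and this is exactly where your argument breaks.

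The gap is a factor-of-$2^{k}$ bookkeeping error that invalidates both of your displayed estimates. Writing $z=\cos2\pi\psi$ and using $\cos2\pi a-\cos2\pi b=-2\sin\pi(a-b)\sin\pi(a+b)$, the Lagrange weight equals
$$\prod_{l\neq i}\frac{|2\sin\pi(\psi-\theta_{l})|\,|2\sin\pi(\psi+\theta_{l})|}{|2\sin\pi(\theta_{j_{i}}-\theta_{l})|\,|2\sin\pi(\theta_{j_{i}}+\theta_{l})|}.$$
The zero-mean/discrepancy argument you invoke applies to $\log|2\sin\pi(\cdot)|$, so it yields lower bounds $e^{-o(k)}$ for products of $|2\sin\pi(\cdot)|$, not of $|\sin\pi(\cdot)|$. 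The products you actually display, $\prod_{l\ne i}|\sin\pi(j_{i}-j_{l})b|$ and $\prod_{l\ne i}|\sin\pi(2\theta+(j_{i}+j_{l}+k-1)b)|$, are each $2^{-k}$ times a $|2\sin|$-product, hence genuinely of size $2^{-k}e^{o(k)}$; your claimed bound $\ge e^{-o(k)}$ for them is false by a factor $2^{k}$ each. Correspondingly, bounding ``each numerator factor by $1$'' discards the essential fact that the numerator $\prod_{l\neq i}|z-\cos2\pi\theta_{l}|$ is itself of size $2^{-k}e^{o(k)}$ (recall $\int_{0}^{1}\log|z-\cos2\pi\theta|\,d\theta=-\log2$ for $z\in[-1,1]$). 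Running your reduction with the true sizes of your products gives a weight bound of order $4^{k}e^{o(k)}$, and the final comparison becomes $\tfrac{1}{\sqrt2}e^{kL(E)}\le(k+1)\,e^{k(L(E)+2\log2-\varepsilon/8)+o(k)}$, which is no contradiction at all. To repair the argument you must keep the factors of $2$ attached to the sines throughout: prove the uniform-in-$\psi$ \emph{upper} bound $\prod_{l\neq i}|2\sin\pi(\psi\mp\theta_{l})|\le e^{o(k)}$ --- an equidistribution statement, not a triviality, and entirely absent from your proposal --- and pair it with $e^{-o(k)}$ \emph{lower} bounds for the $|2\sin|$-products in the denominator (where your treatment of the worst factor by the Diophantine condition and non-resonance is the right idea, and where $\tau<2$, $r>1$ enter exactly as you say). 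This careful bookkeeping is precisely the content of Lemma 7 in \cite{J}, which the paper cites rather than reproves.
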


\begin{proof}\quad 
Assume that $m_{1}$ and $m_{2}$ are both $(L(E)-\varepsilon,k)-singular$ with 
	$$d=m_{2}-m_{1}>\frac{k+1}{2}.$$
Let
	$$n_{i}=m_{i}-\lfloor\frac{3}{4}k\rfloor,\ i=1,2.$$
By Lemma \ref{lemma3.8}, there is a polynomial $R_{k}$ of degree $k$ such that
	$$P_{k}(\theta,E)\prod\limits_{j=0}^{k-1}(1-\alpha cos(2\pi (\theta+jb)))=R_{k}(cos(2\pi (\theta+\frac{k-1}{2}b))).$$
Let
	\begin{equation*}
	\theta_{j}=\left\{
	\begin{matrix*}[l]
	\theta+(n_{1}+\frac{k-1}{2}+j)b,&j=0,1,\dots,\lfloor\frac{k+1}{2}\rfloor-1,\\
	\theta+(n_{2}+\frac{k-1}{2}+j-\lfloor\frac{k+1}{2}\rfloor)b,&j=\lfloor\frac{k+1}{2}\rfloor),\lfloor\frac{k+1}{2}\rfloor)+1,\dots,k.
	\end{matrix*}\right .
	\end{equation*}
The points $\theta_{0},\theta_{1},\dots,\theta_{k}$ are distinct.
Lagrange interpolation then shows 
	$$|R_{k}(z)|=|\sum\limits_{j=0}^{k}R_{k}(cos(2\pi\theta_{j})\frac{\prod\nolimits_{l\neq j}(z-cos(2\pi\theta_{l}))}{\prod\nolimits_{l\neq j}cos(2\pi\theta_{j})-cos(2\pi\theta_{l})})|.$$
Due to
	\begin{center}
		$\displaystyle\frac{1}{k}\sum\limits_{j=0}^{k-1}log(1-\alpha cos(2\pi(\theta+jb)))\to log\frac{1+\sqrt{1-\alpha^2}}{2}$, as $k\to\infty.$
	\end{center}
there exists $k_{3}$ such that for $k>k_{3}$
	$$e^{k(log\frac{1+\sqrt{1-\alpha^2}}{2}-\frac{\varepsilon}{64})}<|\prod\limits_{j=0}^{k-1}(1-\alpha cos(2\pi (\theta+jb)))|<e^{k(log\frac{1+\sqrt{1-\alpha^2}}{2}+\frac{\varepsilon}{64})},$$
and by Lemma \ref{lemma3.12} there exists $k_{4}$ such that for $k>k_{4}$
	$$|P_{k}(cos(2\pi\theta_{j}))|<e^{k(L(E)-\frac{\varepsilon}{8})},\ j=0,1,\dots,k.$$
By Lemma 7 in \cite{J}, we know that if $d<k^{\tau}$ for some $\tau<2$, there exists $k_{5}$ so that for $k>k_{5}$, we have 
	\begin{center}
		\begin{equation}\label{equation3.11}
		\frac{|\prod\nolimits_{l\neq j}(z-cos(2\pi\theta_{l}))|}{|\prod\nolimits_{l\neq j}(cos(2\pi\theta_{j})-cos(2\pi\theta_{l}))|}\le e^{\frac{k\varepsilon}{16}}\ for\ z\in [-1,1],\ 0\le j\le k.
		\end{equation}
		
	\end{center}
Given $\tau<2$, consider $k\in\mathcal{K}$ with $k>max\{k_{3},k_{4},k_{5}\}$ and $\tilde{\theta}$ with 
	$$|P_{k}(\tilde{\theta})|\ge\frac{1}{\sqrt{2}}e^{kL(E)}.$$
But assuming $d<k^{\tau}$, we also have the following upper bound,
	\begin{equation*}
	\begin{split}
	|P_{k}(\tilde{\theta})|&\le e^{-k(log\frac{1+\sqrt{1-\alpha^2}}{2}-\frac{\varepsilon}{64})}(k+1)e^{k(L(E)-\frac{\varepsilon}{8})}e^{k(log\frac{1+\sqrt{1-\alpha^2}}{2}+\frac{\varepsilon}{64})}e^{\frac{k\varepsilon}{16}}\\
	&=(k+1)e^{k(L(E)-\frac{\varepsilon}{32})}.
	\end{split}
	\end{equation*}
This contradiction shows that $d<k^{\tau}$ is impossible.
\end{proof}

Then, we provide the detailed proof of Theorem \ref{theorem3.17}.

\begin{proof}[PROOF OF THEOREM \ref{theorem3.17}]\quad 
Let $E(\theta)$ be a generalized eigenvalue of $H_{\alpha,\theta,b}$, and denote the correspongding generalized eigenfunction by $u_{E}$. Assume without loss of generality $u_{E}(0)\neq 0$(otherwise replace 0 by 1).
By Lemma \ref{lemma3.15} and Lemma \ref{lemma3.16}, if
	$$|n|>max\{k_{1}(E,L(E)-\varepsilon,\theta,0),k_{2}(\theta,b,0,\varepsilon,1.5,E)\}+1,$$
the point $n$ is $(L(E)-\varepsilon,k)$-regular for some $k\in\{ |n|-1,|n|,|n|+1\}\cap\mathcal{K}\neq\emptyset$, since 0 is $(L(E)-\varepsilon,k)$-singular. Thus, there exists an interval $[n_{1},n_{2}]$ of length $k$ containing $n$ such that 
	$$\frac{1}{5}(|n|-1)\le |n-n_{i}|\le\frac{4}{5}(|n|+1),$$
and
	$$|G_{[n_{1},n_{2}]}(n,n_{i})|<e^{-(L(E)-\varepsilon)|n-n_{i}|}.$$
By above inequation and equation (\ref{equation3.10}), we obtain that
	$$|u_{E}(n)|\le2C(2|n|+1)^{\delta}e^{-(\frac{L(E)-\varepsilon}{5})(|n|-1)},$$
where $C$ is a constant.
This implies exponential decay in the region of positive Lyapunov exponent if $\varepsilon$ is chosen small enough.
\end{proof}
We need a theorem to prove Problem \ref{problem1.1}.

\begin{theorem}[Ruelle 1979]\label{theorem4.13}\quad
Suppose $A_{n}\in SL(2,\mathbb{C})$ obey
$$\mathop{lim}\limits_{n\rightarrow\infty}\frac{1}{n}||A_{n}||=0$$
and
$$\mathop{lim}\limits_{n\rightarrow\infty}||A_{n}\dots A_{1}||=\gamma>0.$$
Then there exists a one-dimensional subspace $V\subseteq\mathbb{R}^2$ such that
$$\mathop{lim}\limits_{n\rightarrow\infty}||A_{n}\dots A_{1}||=-\gamma\quad for\ v\in V\backslash\{0\}$$
and
$$\mathop{lim}\limits_{n\rightarrow\infty}||A_{n}\dots A_{1}||=\gamma\quad for\ v\notin V.$$
\end{theorem}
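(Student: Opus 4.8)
The statement is Ruelle's theorem on the contracting direction of a subexponentially growing product of $SL(2)$ matrices; as written the norms should be read through their exponential rates $\frac1n\log\|\cdot\|$ (so the hypotheses are $\frac1n\log\|A_n\|\to0$ and $\frac1n\log\|A_n\cdots A_1\|\to\gamma>0$, and the conclusions assert the corresponding rates $-\gamma$ and $\gamma$). The plan is to construct $V$ explicitly as the limit in $\mathbb P^1$ of the most contracted directions of the partial products $B_n=A_n\cdots A_1$, and then to extract both growth rates from the singular value decomposition.

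First I would set down the $SL(2)$ geometry. Write $B_n=U_n\Sigma_nV_n^{\ast}$ with $\Sigma_n=\mathrm{diag}(\sigma_n,\sigma_n^{-1})$ and $\sigma_n\ge1$, so that $\|B_n\|=\sigma_n$ and $\frac1n\log\sigma_n\to\gamma$. Let $f_n$ be the unit right singular vector attached to $\sigma_n^{-1}$ (the most contracted input direction) and $g_n\perp f_n$ the one attached to $\sigma_n$. The entire argument rests on the single identity $\|B_n v\|^2=a^2\sigma_n^2+b^2\sigma_n^{-2}$ for a unit vector $v=a\,g_n+b\,f_n$. Since $\det A_n=1$ gives $\|A_n^{-1}\|=\|A_n\|$, the submultiplicativity bounds $\sigma_n/\|A_{n+1}\|\le\sigma_{n+1}\le\|A_{n+1}\|\sigma_n$ show that $\sigma_n$ and $\sigma_{n+1}$ share the same exponential rate.

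The crux is the convergence of the directions $[f_n]\in\mathbb P^1$, and this is the step I expect to be the main obstacle. Comparing $B_{n+1}=A_{n+1}B_n$ with $B_n$, the vector $f_n$ satisfies $\|B_{n+1}f_n\|\le\|A_{n+1}\|\,\sigma_n^{-1}$; inserting $v=f_n$ into the identity for $B_{n+1}$ and keeping only the expanding term forces its $g_{n+1}$-component to obey $|a|\le\|A_{n+1}\|/(\sigma_n\sigma_{n+1})$, which bounds the projective angle between $f_n$ and $f_{n+1}$. By the rate computations this angle is at most $e^{-2\gamma n+o(n)}$, hence summable, so $[f_n]$ is Cauchy and converges to a direction $V$. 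It is precisely here that the hypothesis $\frac1n\log\|A_n\|\to0$ is indispensable: it is what makes $\|A_{n+1}\|$ negligible against the exponentially small $\sigma_n^{-1}$, and thus makes the per-step rotation of the contracted direction summable.

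Finally I would read off the two rates. For $v\in V$, the angle between $v$ and $f_n$ is controlled by the tail $\sum_{k\ge n}e^{-2\gamma k+o(k)}=e^{-2\gamma n+o(n)}$, so its $g_n$-component $a_n$ satisfies $a_n^2\sigma_n^2\le e^{-2\gamma n+o(n)}$, which is of the same order as $\sigma_n^{-2}$; the identity then gives $\frac1n\log\|B_nv\|\to-\gamma$. For $v\notin V$, since $[f_n]\to V$ the angle between $v$ and $f_n$ stays bounded below, so its $g_n$-component is bounded below by a positive constant, whence $\|B_nv\|\ge c\,\sigma_n$; combined with the trivial bound $\|B_nv\|\le\sigma_n$ this yields $\frac1n\log\|B_nv\|\to\gamma$, completing the proof. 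The same argument applies verbatim over $\mathbb C$ using the complex singular value decomposition.
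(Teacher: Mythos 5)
The paper contains no proof of this statement for you to be compared against: Theorem \ref{theorem4.13} is quoted as a classical result of Ruelle (1979) and is used as a black box in the proof of Problem \ref{problem1.1}. So the only question is whether your argument is a correct proof of the theorem, and it essentially is. You were also right to begin by repairing the statement: as printed it is garbled (the hypotheses and conclusions are missing the $\frac1n\log$ and the vector $v$, and $V\subseteq\mathbb{R}^2$ sits awkwardly with $A_n\in SL(2,\mathbb{C})$); your reading in terms of exponential rates is what Ruelle's theorem actually asserts. Your proof is the standard deterministic Oseledets-type argument: SVD $B_n=U_n\Sigma_nV_n^{\ast}$ with $\sigma_n=\|B_n\|=e^{\gamma n+o(n)}$, the exact identity $\|B_nv\|^2=a^2\sigma_n^2+b^2\sigma_n^{-2}$, the key estimate $|a|\le\|A_{n+1}\|/(\sigma_n\sigma_{n+1})=e^{-2\gamma n+o(n)}$ for the angle between consecutive most-contracted directions (this is exactly where $\frac1n\log\|A_n\|\to0$ enters, as you say), summability giving $[f_n]\to V$ in $\mathbb{P}^1$, and the two rates read off from the tail bound. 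Two small points you should make explicit in a written version: (i) the directions $f_n,g_n$ are only well-defined when $\sigma_n>1$, which holds for all large $n$ since $\sigma_n\to\infty$; (ii) in the case $v\in V$ your tail estimate gives only the upper bound $\frac1n\log\|B_nv\|\le-\gamma+o(1)$, and you need the matching lower bound $\|B_nv\|\ge|b_n|\sigma_n^{-1}$ together with $|b_n|\to1$ (the $f_n$-component of $v$ tends to one because the angle between $v$ and $f_n$ tends to zero); you gesture at this with ``the identity then gives'' but it deserves a line. With those additions the argument is complete, and it also transfers verbatim to $\mathbb{C}^2$ via the complex SVD, as you note.
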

Then, it is easy to prove the problem \ref{problem1.1}.
\begin{proof}[PROOF OF PROBLEM \ref{problem1.1}]\quad
By Theorem \ref{theorem4.13}, localized states only appear in the region of positive Lyapunov exponent. Therefor, Problem \ref{problem1.1} is the consequence of Theorem \ref{theorem3.2} and Theorem \ref{theorem3.17}.
\end{proof}

\Acknowledgements{The authors would like to thank Qi Zhou for many useful suggestions and help. This work was supported by the NSF of China (No. 11671382), CAS Key Project of Frontier Sciences (N0. QYZDJ-SSW-JSC003), the Key Lab. of Random Complex Structures and Data Sciences CAS and National Center for Mathematics and Interdisciplinary Sciences CAS. }


\end{document}